\newcommand{\PreserveBackslash}[1]{\let\temp=\\#1\let\\=\temp}
\newcolumntype{C}[1]{>{\PreserveBackslash\centering}p{#1}}
\newcolumntype{R}[1]{>{\PreserveBackslash\raggedleft}p{#1}}
\newcolumntype{L}[1]{>{\PreserveBackslash\raggedright}p{#1}}
\numberwithin{equation}{section}
\newcommand{\E}{\mathbb{E}}
\newcommand{\Var}{\textup{Var}}
\newcommand{\FS}{\mathsf{FS}}
\newcommand{\Path}{\textsf{\textup{Path}}}
\newcommand{\rev}{\textup{rev}}
\newtheorem{theorem}{Theorem}[section]
\newtheorem{lemma}[theorem]{Lemma}
\newtheorem{proposition}[theorem]{Proposition}
\newtheorem{corollary}[theorem]{Corollary}
\newtheorem{question}[theorem]{Question}
\newtheorem{problem}[theorem]{Problem}
\theoremstyle{definition}
\newtheorem{definition}[theorem]{Definition}
\title{Bipartite Friends and Strangers Walking on Bipartite Graphs}
\author{Ryan Jeong}
\address{Department of Pure Mathematics and Mathematical Statistics, University of Cambridge, Wilberforce Road, Cambridge, CB3 0WA, UK}
\email{rj450@cam.ac.uk}
\begin{document}

\maketitle

\begin{abstract}
    Given $n$-vertex simple graphs $X$ and $Y$, the friends-and-strangers graph $\FS(X, Y)$ has as its vertices all $n!$ bijections from $V(X)$ to $V(Y)$, where two bijections are adjacent if and only if they differ on two adjacent elements of $V(X)$ whose mappings are adjacent in $Y$. We consider the setting where $X$ and $Y$ are both edge-subgraphs of $K_{r,r}$: due to a parity obstruction, $\FS(X,Y)$ is always disconnected in this setting. Modestly improving a result of Bangachev, we show that if $X$ and $Y$ respectively have minimum degrees $\delta(X)$ and $\delta(Y)$ and they satisfy $\delta(X) + \delta(Y) \geq \lfloor 3r/2 \rfloor + 1$, then $\FS(X,Y)$ has exactly two connected components. This proves that the cutoff for $\FS(X,Y)$ to avoid isolated vertices is equal to the cutoff for $\FS(X,Y)$ to have exactly two connected components. We also consider a probabilistic setup in which we fix $Y$ to be $K_{r,r}$, but randomly generate $X$ by including each edge in $K_{r,r}$ independently with probability $p$. Invoking a result of Zhu, we exhibit a phase transition phenomenon with threshold function $(\log r)/r$. More precisely, below the threshold, $\FS(X,K_{r,r})$ has more than two connected components with high probability, while above the threshold, $\FS(X,K_{r,r})$ has exactly two connected components with high probability. Altogether, our results settle a conjecture and completely answer two problems of Alon, Defant, and Kravitz.
\end{abstract}

\section{Introduction}

\subsection{Background and Motivation}

Let $X$ and $Y$ be $n$-vertex simple graphs. Interpret the vertices of $X$ as positions, and the vertices of $Y$ as people. Two people in the vertex set of $Y$ are friends if they are adjacent and strangers if they are not. Each person chooses a position, producing a starting configuration. From here, at any point in time, two friends standing on adjacent positions may switch places: we call this operation a friendly swap. Our main interest in this paper will be to understand, in terms of assumptions on the structure of the graphs $X$ and $Y$, which configurations are reachable from which other configurations via some sequence of friendly swaps.

We may formalize this setup using the following definition, illustrated in Figure \ref{fig:defn}.

\begin{definition}[\cite{defant2021friends}] \label{defn:fs_def}
Let $X$ and $Y$ be simple graphs on $n$ vertices. The \textit{\textcolor{red}{friends-and-strangers graph}} of $X$ and $Y$, denoted $\FS(X, Y)$, is a graph with vertices consisting of all bijections from $V(X)$ to $V(Y)$, with bijections $\sigma, \tau \in \FS(X,Y)$ adjacent if and only if there exists an edge $\{a, b\}$ in $X$ such that
\begin{enumerate}
    \item $\{\sigma(a), \sigma(b)\} \in E(Y)$,
    \item $\sigma(a) = \tau(b), \ \sigma(b) = \tau(a)$,
    \item $\sigma(c) = \tau(c)$ for all $c \in V(X) \setminus \{a, b\}$.
\end{enumerate}
In other words, $\sigma$ and $\tau$ differ on two adjacent vertices of $X$ whose images under $\sigma$ are adjacent in $Y$. For any such bijections $\sigma, \tau$, we say that $\tau$ is reached from $\sigma$ by an \textit{\textcolor{red}{$(X, Y)$-friendly swap}}.
\end{definition}

\begin{figure}[ht]
\begin{minipage}{.5\linewidth}
\centering
\subfloat[The graph $X$.]{\label{fig:X_gph}\includegraphics[width=0.48\textwidth]{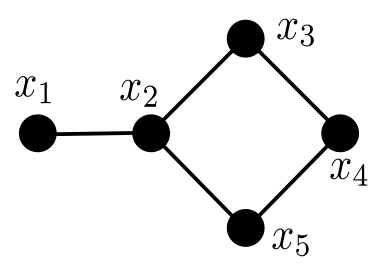}}
\end{minipage}%
\hfill
\begin{minipage}{.5\linewidth}
\centering
\subfloat[The graph $Y$.]{\label{fig:Y_gph}\includegraphics[width=0.48\textwidth]{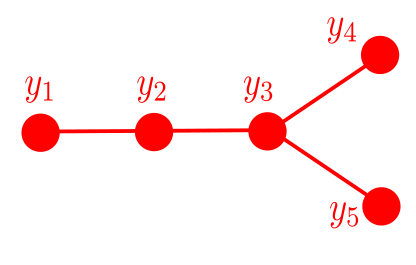}}
\end{minipage}\par\medskip
\centering
\subfloat[A sequence of $(X, Y)$-friendly swaps. Transpositions between adjacent configurations denote the two vertices in $X$ over which the $(X, Y)$-friendly swap takes place. Vertices in $Y$ (in red text) are placed upon vertices of $X$ (in black text). The leftmost configuration corresponds to the bijection $\sigma$ such that $\sigma(x_1) = y_1$, $\sigma(x_2) = y_5$, $\sigma(x_3) = y_3$, $\sigma(x_4) = y_4$, and $\sigma(x_5) = y_2$. Configurations correspond to vertices in $\FS(X, Y)$.]{\label{fig:friendly_swaps}\includegraphics[width=.99\textwidth]{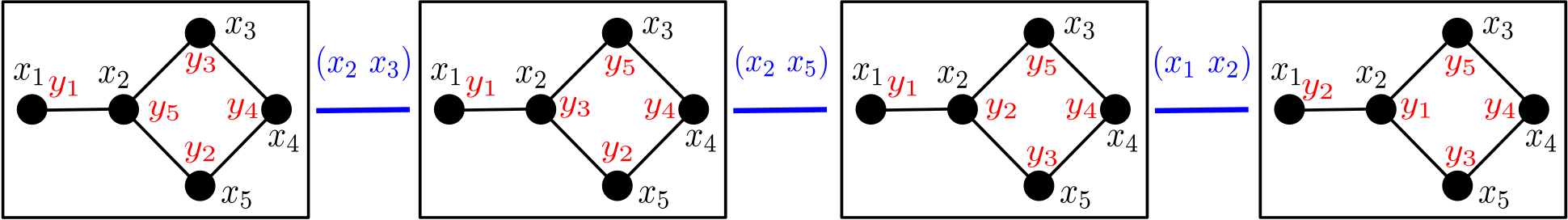}}
\caption{A sequence of $(X, Y)$-friendly swaps in $\FS(X, Y)$. Configurations in the bottom row are vertices in $V(\FS(X, Y))$. Two consecutive configurations differ by an $(X, Y)$-friendly swap, so the corresponding vertices are adjacent in $\FS(X, Y)$. The figure and subcaptions are adapted from \cite{jeong2023diameters}.}
\label{fig:defn}
\end{figure}

Since Defant and Kravitz \cite{defant2021friends} introduced friends-and-strangers graphs, their study has been a productive direction of research. Indeed, Definition \ref{defn:fs_def} lends itself to several natural directions of inquiry. One such direction is to assume $X$ to be some highly structured graph, and study the structure of $\FS(X,Y)$ for arbitrary graphs $Y$: see \cite{defant2022connectedness, jeong2022structural, lee2022connectedness, milojevic2023connectivity, naatz2000graph, wang2023connectivity, wang2022lollipop, zhu2023evacuating}. It is also very natural to ask extremal and probabilistic questions about friends-and-strangers graphs: in addition to the present paper, see \cite{alon2022typical, bangachev2022asymmetric, jeong2023diameters, milojevic2023connectivity, wang2022connectedness}. We also mention that many other works in combinatorics and theoretical computer science may be recast using the language of friends-and-strangers. We illustrate with a non-exhausting listing of examples. Studying the famous $15$-puzzle is equivalent to studying $\FS(X,Y)$ where we let $X$ be the $4$-by-$4$ grid and $Y$ be a star graph; see \cite{brunck2023books, demaine2018simple, parberry2015solving, wilson1974graph, yang2011sliding} for generalizations. The works \cite{barrett1999elements, naatz2000graph, reidys1998acyclic, stanley2012equivalence} all investigate the structure of the graph $\FS(\Path_n, Y)$ for certain graphs $Y$. Asking if $X$ and $Y$ pack \cite{bollobas1978packings, bollobas2017packing, kuhn2009minimum, sauer1978edge, yap1988packing, yuster2007combinatorial} is equivalent to asking if there exists an isolated vertex in $\FS(X,Y)$, while the token swapping problem \cite{aichholzer2021hardness, biniaz2023token, bonnet2018complexity, miltzow_et_al:LIPIcs:2016:6408, yamanaka2015swapping} on the graph $X$ corresponds to studying distances between configurations in $\FS(X, K_n)$. We refer the reader to \cite[Subsection 1.1]{jeong2023diameters} for additional connections to algebraic combinatorics and statistical physics.

As suggested in the first paragraph, however, the most fundamental issue concerning friends-and-strangers graphs that one can study is their connectivity. Under what conditions on $X$ and $Y$ will $\FS(X,Y)$ be connected? If we proceed under a regime in which $\FS(X,Y)$ cannot be connected, how small can the number of connected components get, and what conditions on $X$ and $Y$ ensure that we achieve the smallest possible number of connected components? Of course, the resolution of these questions depends on the assumptions on $X$ and $Y$ under which we work. In this paper, we assume that $X$ and $Y$ are both edge-subgraphs of a complete bipartite graph whose partite sets have the same size $r$, and investigate what happens as $r$ grows large. Part of the motivation for studying this setting comes from the observation that if $X$ and $Y$ are both bipartite, $\FS(X,Y)$ cannot be connected; see the discussion around \cite[Proposition 2.7]{defant2021friends} and \cite[Subsection 2.3]{alon2022typical} for a parity obstruction which demonstrates why this is the case.\footnote{This is for reasons akin to why the so-called $14$-$15$ puzzle is unsolvable.} Indeed, this particular setup was of interest to many: it was also studied in \cite{alon2022typical, bangachev2022asymmetric, milojevic2023connectivity, wang2022connectedness, zhu2023evacuating}.

\subsection{Notation and Conventions}

We assume in this paper that all graphs are simple. For the sake of completeness, we list the following standard conventions, which we make use of throughout the article. For a graph $G$,
\begin{itemize}
    \item we let $V(G)$ and $E(G)$ respectively denote the vertex and edge sets of $G$;
    \item if $S \subseteq V(G)$, then we let $G|_S$ denote the induced subgraph of $G$ on $S$;
    \item if $v \in V(G)$, then we let $N(v) = \{u \in V(G) : \{u,v\} \in E(G)\}$ denote the (open) neighborhood of $v$;
    \item we let $\delta(G)$ and $\Delta(G)$ respectively denote the minimum degree and maximum degree of $G$;
    \item we let $\mathcal G(G, p)$ denote the probability space of edge-subgraphs of $G$ in which each edge appears independently with probability $p$.
\end{itemize}
We let $K_{r,r}$ denote the complete bipartite graph whose two partite sets both have size $r$. If $\Sigma$ is a finite sequence, then $\rev(\Sigma)$ denotes the reverse of $\Sigma$. Finally, for two sets $S$ and $T$, we let $S \bigtriangleup T = (S \setminus T) \cup (T \setminus S)$ denote their symmetric difference.

\subsection{Main Results}

As previously mentioned, when $X$ and $Y$ are both edge-subgraphs of $K_{r,r}$, the best we can hope for is that $\FS(X,Y)$ has exactly two connected components. A natural extremal problem results from asking for conditions on the minimum degrees of $X$ and $Y$ ensuring that $\FS(X,Y)$ has exactly two connected components. In this direction, we let $d_{r,r}$ be the smallest nonnegative integer such that whenever $X$ and $Y$ are edge-subgraphs of $K_{r,r}$ with $\delta(X) \geq d_{r,r}$ and $\delta(Y) \geq d_{r,r}$, $\FS(X,Y)$ has exactly two connected components. This problem was first studied in \cite[Sections 5 and 6]{alon2022typical}, which proved bounds on $d_{r,r}$ that were tight up to additive constants. Asymmetrizing the problem by dropping the assumption that $X$ and $Y$ must satisfy the same minimum degree condition, \cite[Sections 6 and 7]{bangachev2022asymmetric} generalized the results of \cite{alon2022typical}, again obtaining bounds that were tight up to additive constants. In the present article, we sharpen the results of \cite{bangachev2022asymmetric} by shaving off the additive constants, producing completely tight conditions concerning when $\FS(X,Y)$ has exactly two connected components. 

In a different direction (and in the spirit of prior work on the topic, as mentioned earlier), instead of varying both edge-subgraphs $X$ and $Y$, we may fix $Y = K_{r,r}$, and ask for conditions on $X$ which ensure that $\FS(X,K_{r,r})$ has exactly two connected components. A stochastic analogue of this problem is obtained by letting $X \in \mathcal G(K_{r,r},p)$ and by asking for both conditions on $p = p(r)$ which ensure that $\FS(X,K_{r,r})$ has exactly two connected components with high probability (that is, with probability tending to $1$ as $r \to \infty$) and for conditions on $p$ which ensure that $\FS(X,K_{r,r})$ has more than two connected components with high probability. This problem was raised in \cite[Question 7.6]{alon2022typical}. Here, we invoke a recent result of \cite{zhu2023evacuating}, which quickly leads to a complete answer this problem. Specifically, we find a phase transition with threshold function $p(r) = (\log r)/r$.

We now state the specific results that we will prove. In Section \ref{sec:minimum_degree}, we prove the following result.
\begin{theorem} \label{thm:minimum_degree}
    Let $r \geq 4$, and let $X$ and $Y$ be edge-subgraphs of $K_{r,r}$ such that
    \begin{align*}
        \delta(X) + \delta(Y) \geq \lfloor 3r/2 \rfloor + 1.
    \end{align*}
    Then $\FS(X,Y)$ has exactly two connected components.
\end{theorem}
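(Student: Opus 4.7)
Write the bipartitions as $V(X) = X_1 \sqcup X_2$ and $V(Y) = Y_1 \sqcup Y_2$ with $|X_i| = |Y_i| = r$. The parity obstruction described in the introduction already gives that $\FS(X,Y)$ has at least two connected components, so the task reduces to showing there are at most two: any two bijections $\sigma, \tau$ in the same parity class must be joined by a path of $(X,Y)$-friendly swaps. The plan is to proceed by induction on $r$, with the base cases handled either by direct inspection (the hypothesis $\delta(X) + \delta(Y) \geq \lfloor 3r/2 \rfloor + 1$ is very restrictive for small $r$, e.g.\ for $r=4$ it forces one of $X,Y$ to be $K_{4,4}$ and the other to have $\delta \geq 3$) or by absorbing the additive slack already present in the results of Alon--Defant--Kravitz \cite{alon2022typical} and of Bangachev \cite{bangachev2022asymmetric}.

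The technical heart of the argument is a \emph{pivot lemma}: assuming $\delta(X) + \delta(Y) \geq \lfloor 3r/2 \rfloor + 1$, one can choose a pair of pivots $v_1 \in X_1$ and $v_2 \in X_2$ of minimum degree in $X$, designate their desired targets $u_1 = \tau(v_1)$ and $u_2 = \tau(v_2)$ in $V(Y)$, and exhibit a sequence of $(X,Y)$-friendly swaps transporting $\sigma$ to a bijection $\sigma^{\star}$ with $\sigma^{\star}(v_i) = u_i$ for $i \in \{1,2\}$. I would prove this by locating short cycles (of length $4$ or $6$) that simultaneously lie in $X$ and in the $\sigma$-pullback of $Y$ and pass through $v_1, v_2$; tokens can then be rotated along these common cycles by a scripted sequence of friendly swaps, and the existence of such a common cycle through any prescribed vertex-target pair follows from a standard common-neighborhood count that just barely succeeds under the sharp hypothesis $\delta(X) + \delta(Y) \geq \lfloor 3r/2 \rfloor + 1$.

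Once the pivots are pinned, the residual instance lives on $X' = X - \{v_1, v_2\}$ and $Y' = Y - \{u_1, u_2\}$, both edge-subgraphs of $K_{r-1,r-1}$, and one closes the induction by invoking the inductive hypothesis on $\FS(X',Y')$ to connect $\sigma^{\star}$ to $\tau$. The bound $\lfloor 3(r-1)/2 \rfloor + 1$ is only $1$ or $2$ smaller than $\lfloor 3r/2 \rfloor + 1$ depending on the parity of $r$, whereas deleting $\{v_1,v_2\}$ from $X$ and $\{u_1,u_2\}$ from $Y$ can in principle reduce the minimum-degree sum by as much as $4$. The main obstacle of the proof therefore lies in this bookkeeping: the pivots and their targets must be selected so that the degree drop of the residual instance falls within what the inductive hypothesis can absorb, and this is most delicate in the tight parity of $r$ where the inductive slack is only $1$. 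I would handle this with an auxiliary extremal argument showing that if no pivot choice avoids an unacceptable degree drop, then the minimum-degree vertices of $X$ and $Y$ are so tightly interlocked that the combined structure of $X$ and $Y$ falls into a small family of rigid configurations, each of which can be resolved directly.
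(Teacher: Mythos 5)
Your proposal takes a genuinely different route from the paper's, but the inductive strategy has a gap that you identify but do not close. The paper does not induct on $r$ at all: it dispatches even $r$ by citing \cite[Theorem 1.10]{bangachev2022asymmetric}, and for odd $r$ it proves an exchangeability lemma (Lemma~\ref{lem:exchangeability}) showing that, under the degree hypothesis, any $u,v$ in distinct parts of $Y$ whose preimages under $\sigma$ are adjacent in $X$ are $(X,Y)$-exchangeable from $\sigma$. Proposition~\ref{prop:exchangeability_components} is then invoked twice to transfer the component count of $\FS(X,Y)$ to that of $\FS(K_{r,r},K_{r,r})$, which is two by Proposition~\ref{prop:two_complete_bipartite}. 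There is no residual instance and no induction; the entire technical weight sits in the exchangeability lemma, whose proof is a long case analysis in which many of the required swap sequences were found by computer search (they run as long as $55$ swaps).

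The bookkeeping in your inductive step is both miscounted and, once corrected, still insufficient. Removing one vertex from each part of $X$ lowers $\delta(X)$ by at most one (every remaining vertex has all its neighbours on the opposite side and loses at most one), and likewise for $Y$, so the minimum-degree sum drops by at most two, not four. But this is already fatal for odd $r$: there $\lfloor 3r/2 \rfloor + 1 - (\lfloor 3(r-1)/2 \rfloor + 1) = 1$, while the sum can drop by two, so the residual pair may miss the inductive hypothesis by one. When $X$ and $Y$ are regular of the critical degrees, every choice of pivots loses a full degree on each side, so no pivot selection escapes this; and regular bipartite graphs of a given degree are abundant, so the assertion that the obstruction reduces to a ``small family of rigid configurations'' is not substantiated. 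Separately, the pivot lemma itself---routing $u_1,u_2$ onto $v_1,v_2$---is where essentially all the difficulty lives, and the sketch of rotating around a common $4$- or $6$-cycle located by a single common-neighbourhood count does not plausibly cover it at the sharp threshold: the analogous step in the paper requires iterated neighbourhood estimates, several layers of case analysis, and computer-generated swap sequences. As written, the proposal does not constitute a proof.
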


Theorem \ref{thm:minimum_degree} sharpens \cite[Theorem 1.10]{bangachev2022asymmetric}, which gave a lower bound of $3r/2 + 1$. Together with \cite[Theorem 1.11]{bangachev2022asymmetric} and a computer check for the $r=3$ case, Theorem \ref{thm:minimum_degree} in the settings $\delta(X) = \delta(Y)$ and $\delta(Y) = r$ respectively implies the following statements.

\begin{corollary} \label{cor:symmetric_minimum_degree}
    We have $d_{r,r} = \lceil (3r+1)/4 \rceil$.
\end{corollary}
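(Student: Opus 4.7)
The plan is to deduce Corollary \ref{cor:symmetric_minimum_degree} by specializing Theorem \ref{thm:minimum_degree} to the symmetric setting and combining this with the matching lower bound construction of \cite[Theorem 1.11]{bangachev2022asymmetric}, with small $r$ handled by the computer check mentioned just before the corollary.

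For the upper bound $d_{r,r} \leq \lceil (3r+1)/4 \rceil$ in the regime $r \geq 4$, I would set $\delta(X) = \delta(Y) = d$ in Theorem \ref{thm:minimum_degree}; the hypothesis becomes $2d \geq \lfloor 3r/2 \rfloor + 1$, so the smallest valid $d$ is $\lceil (\lfloor 3r/2 \rfloor + 1)/2 \rceil$. A short case analysis on $r \pmod 4$ then verifies that this quantity coincides with $\lceil (3r+1)/4 \rceil$: for instance, when $r = 4k$ one has $\lceil (6k+1)/2 \rceil = 3k+1 = \lceil (12k+1)/4 \rceil$, and the other three residues mod $4$ are analogous. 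For the matching lower bound $d_{r,r} \geq \lceil (3r+1)/4 \rceil$, I would invoke the extremal construction of \cite[Theorem 1.11]{bangachev2022asymmetric}, which exhibits edge-subgraphs $X, Y$ of $K_{r,r}$ with $\delta(X) = \delta(Y) = \lceil (3r+1)/4 \rceil - 1$ such that $\FS(X, Y)$ has strictly more than two connected components.

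The main obstacle is not really intellectual here, since all the hard work is encapsulated in Theorem \ref{thm:minimum_degree} and in the extremal construction imported from \cite{bangachev2022asymmetric}. The only care needed is the arithmetic identification of $\lceil (\lfloor 3r/2 \rfloor + 1)/2 \rceil$ with $\lceil (3r+1)/4 \rceil$, which one confirms by checking all four residues of $r$ modulo $4$ to ensure the interaction of the floor and ceiling operations lines up uniformly. The residual cases $r \in \{1, 2, 3\}$, falling outside the hypothesis of Theorem \ref{thm:minimum_degree}, are disposed of by direct inspection and by the computer check flagged in the paragraph preceding the corollary.
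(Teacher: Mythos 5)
Your proposal is correct and takes essentially the same route as the paper: the upper bound comes from specializing Theorem \ref{thm:minimum_degree} to $\delta(X)=\delta(Y)$ and observing $2\lceil (3r+1)/4 \rceil \geq \lfloor 3r/2 \rfloor + 1$ (your mod-$4$ case check is just an expansion of that verification), the lower bound is imported from \cite[Theorem 1.11]{bangachev2022asymmetric}, and small $r$ outside the hypothesis of Theorem \ref{thm:minimum_degree} is handled by the computer check described in the paper.
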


\begin{corollary} \label{cor:complete_bipartite_minimum_degree}
    For each $r \geq 2$, let $d_{r,r}^*$ be the smallest nonnegative integer such that whenever $X$ is an edge-subgraph of $K_{r,r}$ with $\delta(X) \geq d_{r,r}^*$, $\FS(X,Y)$ has exactly two connected components. We have that
    \begin{align*}
        d_{r,r}^* = \begin{cases}
            \left\lfloor r/2 \right\rfloor + 1 & r \neq 3, \\
            3 & r = 3.
        \end{cases}
    \end{align*}
\end{corollary}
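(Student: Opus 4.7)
The plan is to specialize Theorem \ref{thm:minimum_degree} to $Y = K_{r,r}$ for the upper bound, to import an extremal construction from \cite{bangachev2022asymmetric} for the matching lower bound, and to handle $r \in \{2,3\}$ by direct verification. The substantive work is entirely packaged in those two external inputs; the present corollary is then a matter of collecting them and sorting out the boundary cases.

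For the upper bound when $r \geq 4$, I would set $Y = K_{r,r}$ in Theorem \ref{thm:minimum_degree}. Since $\delta(K_{r,r}) = r$, the hypothesis $\delta(X) + \delta(Y) \geq \lfloor 3r/2 \rfloor + 1$ reduces to $\delta(X) \geq \lfloor 3r/2 \rfloor + 1 - r = \lfloor r/2 \rfloor + 1$, which immediately yields $d_{r,r}^* \leq \lfloor r/2 \rfloor + 1$. For the matching lower bound, I would cite \cite[Theorem 1.11]{bangachev2022asymmetric}, which supplies an edge-subgraph $X \subseteq K_{r,r}$ of minimum degree exactly $\lfloor r/2 \rfloor$ whose friends-and-strangers graph $\FS(X, K_{r,r})$ has more than two connected components, forcing $d_{r,r}^* \geq \lfloor r/2 \rfloor + 1$.

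The case $r = 3$ is genuinely exceptional: the formula $\lfloor r/2 \rfloor + 1 = 2$ undershoots, because there exist $2$-regular edge-subgraphs of $K_{3,3}$ whose friends-and-strangers graph has strictly more than two components, so the threshold must jump to $3$ (forcing $X = K_{3,3}$). This gap is closed by the computer check referenced above the corollary statement, which also verifies that $\FS(K_{3,3}, K_{3,3})$ itself has exactly two components. The case $r = 2$ is essentially trivial: $\delta(X) \geq 2$ forces $X = K_{2,2}$, and a direct enumeration (consistent with the parity obstruction) shows $\FS(K_{2,2}, K_{2,2})$ has two components of size $12$ each, while any $X$ with $\delta(X) = 1$ is a perfect matching whose two commuting involutive swaps split $\FS(X, K_{2,2})$ into six components. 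The only delicate point of the whole argument is making sure the asymmetric roles of $X$ and $Y$ are correctly tracked when specializing Theorem \ref{thm:minimum_degree}; past that, there is no real obstacle.
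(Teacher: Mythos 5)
Your approach matches the paper's exactly: specialize Theorem~\ref{thm:minimum_degree} with $Y = K_{r,r}$ for the upper bound $d_{r,r}^* \leq \lfloor r/2 \rfloor + 1$ when $r \geq 4$, cite the construction behind \cite[Theorem 1.11]{bangachev2022asymmetric} for the matching lower bound, and resolve $r = 3$ by computer check (the paper verifies that $\FS(K_{3,3}, C_6)$ has $12$ components) with $r = 2$ being trivial. Two small slips in your $r=2$ discussion, neither of which affects the argument: an edge-subgraph of $K_{2,2}$ with $\delta(X) = 1$ need not be a perfect matching (it could be a path on four vertices), and the perfect matching $X$ yields $\FS(X, K_{2,2})$ with $12$ connected components ($8$ isolated vertices and $4$ components of size $4$), not $6$ --- though all you need is that this number exceeds $2$.
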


Corollary \ref{cor:symmetric_minimum_degree} settles \cite[Conjecture 7.4]{alon2022typical}, while Corollary \ref{cor:complete_bipartite_minimum_degree} sharpens \cite[Corollary 1.12]{bangachev2022asymmetric} and provides a complete answer to \cite[Problem 7.7]{alon2022typical}. Though the improvement that Theorem \ref{thm:minimum_degree} provides over \cite[Theorem 1.10]{bangachev2022asymmetric} is very modest, we recall the macroscopic intent of \cite[Conjecture 7.4]{alon2022typical}, as seen in the discussion around \cite[Theorem 1.4]{alon2022typical}: determining whether or not the cutoff for avoiding isolated vertices is the same as the cutoff for $\FS(X,Y)$ to have the smallest possible number of connected components. It follows from the proof of \cite[Theorem 1.11]{bangachev2022asymmetric} that $\lfloor 3r/2 \rfloor+1$ is the cutoff to avoid isolated vertices, so Theorem \ref{thm:minimum_degree} confirms that if $X$ and $Y$ are both edge-subgraphs of $K_{r,r}$, these cutoffs are exactly the same for all $r \geq 4$. Our results here may thus be interpreted as providing further motivation for \cite[Question 7.5]{alon2022typical}, which asks whether there exists an analogue for friends-and-strangers graphs of the well-known phenomenon that for a random graph process, the stopping times for no isolated vertices and the graph being connected are asymptotically almost surely the same. Indeed, Theorem \ref{thm:minimum_degree} might be thought of as giving something of a deterministic, extremal variant for the bipartite setting.

In Section \ref{sec:random_edge_subgraphs}, we prove the following result.
\begin{theorem} \label{thm:random_edge_subgraphs}
    Let $X$ be a random graph in $\mathcal G(K_{r,r},p)$, where $p=p(r)$ depends on $r$. Let $\omega(r)$ be a function of $r$ such that $\omega(r) \to \infty$. If
    \begin{align*}
        p = \frac{\log r - \omega(r)}{r}
    \end{align*}
    and $p \geq 0$, then $\FS(X, K_{r,r})$ has more than two connected components with high probability. If 
    \begin{align*}
        p = \frac{\log r + \omega(r)}{r},
    \end{align*}
    and $p \leq 1$, then $\FS(X,K_{r,r})$ has exactly two connected components with high probability.
\end{theorem}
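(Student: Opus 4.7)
The plan is to handle the two regimes separately using different techniques. The subcritical regime reduces to an isolated-vertex argument for $\mathcal G(K_{r,r}, p)$, while for the supercritical regime the heavy lifting is done by the cited result of \cite{zhu2023evacuating}.

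For the subcritical direction, the key structural observation is that if $v \in V(X)$ is isolated in $X$, then $\sigma(v)$ is invariant under every $(X, K_{r,r})$-friendly swap, since there is no edge of $X$ incident to $v$ across which one could swap. Hence $\sigma(v)$ is constant on each connected component of $\FS(X, K_{r,r})$, and since $\sigma(v)$ takes each of the $2r$ values in $V(K_{r,r})$ for some bijection $\sigma$, this forces at least $2r \geq 4$ connected components. It therefore suffices to show that $X$ has an isolated vertex with high probability when $p = (\log r - \omega(r))/r$. Letting $N$ count isolated vertices, a first-moment computation gives
\[
\E[N] \;=\; 2r(1-p)^r \;\sim\; 2e^{\omega(r)} \to \infty,
\]
and the usual pairwise covariance bound (vertices in opposite parts share exactly one potential edge, vertices in the same part share none) gives $\Var(N) = o(\E[N]^2)$. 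By Chebyshev's inequality, $\Prob(N \geq 1) \to 1$.

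For the supercritical direction, I would invoke the theorem of \cite{zhu2023evacuating}, which supplies a weak structural condition $\mathcal C$ on $X$ under which $\FS(X, K_{r,r})$ is guaranteed to have exactly two connected components. The task then reduces to verifying that $X \sim \mathcal G(K_{r,r}, p)$ satisfies $\mathcal C$ with high probability when $p = (\log r + \omega(r))/r$. This should follow from classical random bipartite graph theory: the sharp threshold for $\mathcal G(K_{r,r}, p)$ to be connected and simultaneously have no isolated vertex is exactly $(\log r)/r$, so above the threshold $X$ is connected and isolated-vertex-free with high probability via standard first- and second-moment arguments.

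The main obstacle is identifying the precise hypothesis in Zhu's theorem one needs to apply; the subtlety is that one cannot afford to invoke a minimum-degree condition of order $\Theta(r)$ (such as the one supplied by Theorem \ref{thm:minimum_degree}), since at $p = (\log r + \omega(r))/r$ the expected degree $pr = \log r + \omega(r)$ is much smaller than $r$. The supercritical half of the argument thus rests on Zhu's hypothesis being a considerably weaker structural property, e.g., something close to connectedness of $X$. Granting this, the two-sided phase transition with threshold $(\log r)/r$ follows by combining the two halves.
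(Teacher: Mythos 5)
Your subcritical argument is essentially the paper's: both halves of that direction reduce, via first and second moments on the isolated-vertex count, to showing that $X$ has an isolated vertex with high probability when $p = (\log r - \omega(r))/r$. The paper then passes through Proposition~\ref{prop:whp_conditions} and the observation that a disconnected $X$ forces more than two components (a consequence of Theorem~\ref{thm:bipartite_fs_connectivity} combined with the parity lower bound), whereas your direct observation that $\sigma(v)$ is invariant when $v$ is isolated is a cleaner, self-contained route to the same conclusion; either is fine.

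The supercritical half, however, has a genuine gap, and it is precisely the one you flagged as an ``obstacle.'' Zhu's criterion (Theorem~\ref{thm:bipartite_fs_connectivity}) is not merely ``$X$ is connected'': it requires that $X$ be connected, that $X$ not be a cycle, \emph{and} that $X$ contain no $r$-bridge, i.e.\ no path $v_1, \dots, v_r$ of cut edges whose internal vertices all have degree $2$. Connectivity and absence of isolated vertices do not rule this out; a connected graph can easily contain a long path of degree-$2$ cut vertices. Ruling out that structure at $p = (\log r + \omega(r))/r$ is the substantive work in the paper's proof and is not a consequence of ``classical random bipartite graph theory.'' Your instinct that a minimum-degree bound of order $\Theta(r)$ is unavailable is correct, and the paper uses that bound only to dispatch the easy regime $p > 1/2$. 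For $p \le 1/2$, the paper instead compares the probability that $X$ contains a long degree-$2$ path (a superset of the cycle and $r$-bridge events) to the probability that $X$ contains a path component on $k \in \{r-2, \dots, 2r-2\}$ vertices (by deleting the two end edges), then to the probability of a tree component of the same size via the ratio $(k!/2)/k^{k-2} \ll 1/k^2$, and finally notes that such a tree component forces $X$ to be disconnected, which by Proposition~\ref{prop:whp_conditions} has probability $o(1)$. You would need to supply this chain of comparisons, or some substitute argument bounding the probability of an $r$-bridge, before the supercritical direction is complete.
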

Theorem \ref{thm:random_edge_subgraphs}, which identifies $p(r) = (\log r)/r$ as a threshold function, identifies a phase transition and provides an essentially complete answer to \cite[Question 7.6]{alon2022typical}.\footnote{The arXiv version of \cite[Question 7.6]{alon2022typical}, but not the journal version, contains a mistake in its statement. Specifically, if $X$ is a random graph in $\mathcal G(K_{r,r},p)$, the arXiv version asks for conditions on $p$ ensuring that $\FS(X, K_{r,r})$ is disconnected with high probability and conditions on $p$ ensuring that $\FS(X, K_{r,r})$ is connected with high probability. From our discussion, that problem is trivial, since $\FS(X, K_{r,r})$ is always disconnected in this setting when $r \geq 2$.} We conclude in Section \ref{sec:future_directions} with some suggested directions for future research.

\section{Preliminaries} \label{sec:preliminaries}

In this section, we list results from prior work that will be relevant later in the paper. We mention that some of the results below are special cases of what is stated in the corresponding cited result.

\begin{proposition}[{\cite[Proposition 2.6]{defant2021friends}}] \label{prop:basic_properties}
Definition \ref{defn:fs_def} is symmetric with respect to $X$ and $Y$: if $X$ and $Y$ are both $n$-vertex graphs, we have that $\FS(X,Y) \cong \FS(Y,X)$. 
\end{proposition}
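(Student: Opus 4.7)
The plan is to exhibit an explicit bijection $\Phi \colon V(\FS(X,Y)) \to V(\FS(Y,X))$ and check that it preserves adjacency. The natural candidate is $\Phi(\sigma) = \sigma^{-1}$: every bijection $\sigma \colon V(X) \to V(Y)$ has an inverse $\sigma^{-1} \colon V(Y) \to V(X)$, which is exactly a vertex of $\FS(Y,X)$. The map $\sigma \mapsto \sigma^{-1}$ is manifestly a bijection between the two vertex sets, with inverse given by the analogous construction in the opposite direction.

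The heart of the argument is to verify that $\Phi$ preserves edges. Suppose $\sigma$ and $\tau$ are adjacent in $\FS(X,Y)$, witnessed by an edge $\{a,b\} \in E(X)$ satisfying conditions (1)--(3) of Definition \ref{defn:fs_def}. I would set $y_1 = \sigma(a)$ and $y_2 = \sigma(b)$ and argue that $\{y_1,y_2\}$, which lies in $E(Y)$ by condition (1), witnesses adjacency of $\sigma^{-1}$ and $\tau^{-1}$ in $\FS(Y,X)$. The analogue of condition (1) now becomes $\{\sigma^{-1}(y_1), \sigma^{-1}(y_2)\} = \{a,b\} \in E(X)$; the analogue of condition (2) follows from inverting the relations $\sigma(a) = \tau(b)$ and $\sigma(b) = \tau(a)$, which give $\sigma^{-1}(y_1) = \tau^{-1}(y_2)$ and $\sigma^{-1}(y_2) = \tau^{-1}(y_1)$; and for the analogue of condition (3), I would note that if $y \in V(Y) \setminus \{y_1,y_2\}$ and $c = \sigma^{-1}(y)$, then $c \notin \{a,b\}$, whence $\tau(c) = \sigma(c) = y$ by the original condition (3), so $\tau^{-1}(y) = c = \sigma^{-1}(y)$.

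This shows $\Phi$ sends edges to edges; the reverse implication follows by applying the same construction to $\FS(Y,X)$ and noting that the two maps are mutually inverse. There is no real obstacle here: the statement just reflects the observation that interchanging the roles of ``positions'' and ``people'' leaves the swap dynamics invariant. The only point requiring care is matching the three conditions in Definition \ref{defn:fs_def} correctly under inversion, particularly condition (3), where one verifies that the labels fixed by the original swap remain the labels fixed by its inverse-image swap.
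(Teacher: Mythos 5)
Your argument is correct and is the standard proof of this fact: the paper itself does not reprove it but cites Defant and Kravitz, whose proof is exactly the $\sigma \mapsto \sigma^{-1}$ bijection you describe, with the same verification that conditions (1)--(3) of Definition \ref{defn:fs_def} are preserved under inversion.
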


The following Proposition \ref{prop:two_complete_bipartite} presents an obstruction on the connectivity of $\FS(X,Y)$ when $X$ and $Y$ are edge-subgraphs of $K_{r,r}$, and also shows that the smallest number of connected components that $\FS(X,Y)$ may have in this setting is two.

\begin{proposition}[{\cite[Proposition 2.6]{alon2022typical}}] \label{prop:two_complete_bipartite}
    For $r \geq 2$, $\FS(K_{r,r}, K_{r,r})$ has exactly two connected components.
\end{proposition}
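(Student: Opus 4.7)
My plan is to exhibit a sign-like invariant that partitions the vertices of $\FS(K_{r,r},K_{r,r})$ into two nonempty sets, show it is preserved by friendly swaps to give at least two components, and then produce enough friendly-swap moves within each fiber to verify transitivity.

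Fix bipartitions $V(X) = A_1 \sqcup A_2$ and $V(Y) = B_1 \sqcup B_2$ with $|A_i| = |B_j| = r$, and fix an identification of $V(X)$ with $V(Y)$ so that $\mathrm{sgn}(\sigma)$ is defined for each bijection $\sigma \colon V(X) \to V(Y)$. Set $m(\sigma) := |\{a \in A_1 : \sigma(a) \in B_1\}|$ and $I(\sigma) := (-1)^{m(\sigma)}\,\mathrm{sgn}(\sigma)$. A friendly swap composes $\sigma$ with a transposition along an edge of $X = K_{r,r}$ (which goes between $A_1$ and $A_2$, flipping $\mathrm{sgn}$) whose two $\sigma$-images must be adjacent in $Y = K_{r,r}$ (and so lie one in each of $B_1, B_2$, causing $m$ to change by exactly $\pm 1$). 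Both factors of $I$ flip, so $I$ is conserved. The identity bijection has $I = (-1)^r$, while the bijection obtained from it by precomposing (as an abstract bijection, not via a friendly swap) with a transposition inside $A_1$ flips only $\mathrm{sgn}$, attaining the opposite value of $I$. Hence $\FS(K_{r,r},K_{r,r})$ has at least two components.

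For the matching upper bound, first observe that any $\sigma$ is connected to an \emph{aligned} bijection with $m = r$: whenever $m(\sigma) < r$, there exist $a \in A_1$ and $b \in A_2$ with $\sigma(a) \in B_2$ and $\sigma(b) \in B_1$, and the swap $(a,b)$ is legal and increments $m$. An aligned bijection is determined by a pair $(\alpha,\beta)$ of bijections $A_1 \to B_1$ and $A_2 \to B_2$, on which the invariant becomes $(-1)^r\,\mathrm{sgn}(\alpha)\mathrm{sgn}(\beta)$. The central construction is the following four-move circuit: for distinct $a_1, a_2 \in A_1$ and $b_1, b_2 \in A_2$, the sequence of friendly swaps $(a_1,b_1), (a_2,b_2), (a_1,b_2), (a_2,b_1)$ is legal at each step (by direct check, tracking which part of $Y$ each image lies in) and returns to an aligned bijection, simultaneously transposing $\sigma(a_1) \leftrightarrow \sigma(a_2)$ and $\sigma(b_1) \leftrightarrow \sigma(b_2)$. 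The paired transpositions so realized generate the index-two subgroup $\{(\gamma,\delta) \in S_r \times S_r : \mathrm{sgn}(\gamma)\mathrm{sgn}(\delta) = +1\}$, which acts transitively on each of the two $I$-fibers among aligned bijections; combined with the alignment reduction this yields at most two components.

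The main obstacle is the elementary but slightly fiddly verification that the four-swap circuit is legal throughout and that paired double transpositions generate the stated index-two subgroup (visible by composing two such moves sharing a common transposition in $A_1$ to produce an element of $\{e\}\times A_r$, and symmetrically). Everything else is bookkeeping that leverages the transitivity of $K_{r,r}$ under the natural $S_r \wr S_2$ symmetry of both factors.
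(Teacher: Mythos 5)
The paper does not prove this proposition in-house; it is quoted directly from \cite{alon2022typical}, so there is no argument in the present source to compare against. Your proof is correct as written and self-contained. The invariant $I(\sigma)=(-1)^{m(\sigma)}\,\mathrm{sgn}(\sigma)$ is genuinely conserved: every edge of $X=K_{r,r}$ straddles the bipartition, the two images under $\sigma$ must straddle the bipartition of $Y$, so $m$ changes by exactly $\pm 1$ while $\mathrm{sgn}$ flips, and the two sign changes cancel. The alignment step is sound because whenever $m(\sigma)<r$ the two sets $A_1\cap\sigma^{-1}(B_2)$ and $A_2\cap\sigma^{-1}(B_1)$ both have size $r-m(\sigma)>0$, and swapping along such a pair is legal and increases $m$. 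I traced the four-swap circuit $(a_1,b_1),(a_2,b_2),(a_1,b_2),(a_2,b_1)$ on an aligned bijection and confirmed that after each step the two swapped positions carry images in opposite parts of $Y$, so every step is a friendly swap, and the net effect is $(\alpha,\beta)\mapsto(\alpha\circ\tau_1,\beta\circ\tau_2)$ for the transpositions $\tau_1=(a_1\,a_2)$, $\tau_2=(b_1\,b_2)$. Composing two such moves sharing $\tau_1$ gives $\{e\}\times A_r$ and symmetrically $A_r\times\{e\}$; together with any one paired transposition these generate the full index-two subgroup $\{(\gamma,\delta):\mathrm{sgn}(\gamma)\mathrm{sgn}(\delta)=1\}$, whose right action is transitive on each $I$-fiber of aligned pairs. (The edge case $r=2$ works too: the subgroup has order two and is generated by the single available paired transposition.) Combined with the alignment reduction, this gives at most two components, and the invariant gives at least two, so exactly two. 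The lower-bound half is the standard parity obstruction the paper references around \cite[Proposition 2.7]{defant2021friends}; your explicit four-move circuit is a clean, elementary way to obtain the matching upper bound.
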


We now introduce what might be thought of as an extension of an $(X,Y)$-friendly swap. Proposition \ref{prop:exchangeability_components} demonstrates how this notion will be useful in the proof of Theorem \ref{thm:minimum_degree}.

\begin{definition}[{\cite[Subsection 2.4]{alon2022typical}}]
    Take $n$-vertex graphs $X$ and $Y$, a bijection $\sigma: V(X) \to V(Y)$, and distinct vertices $u, v \in V(Y)$. We say that $u$ and $v$ are \textit{\textcolor{red}{$(X,Y)$-exchangeable from $\sigma$}} if $\sigma$ and $(u \ v) \circ \sigma$ are in the same connected component of $\FS(X,Y)$. If $\Sigma$ is a sequence of $(X,Y)$-friendly swaps that transforms $\sigma$ into $(u \ v) \circ \sigma$, then we say that applying $\Sigma$ to $\sigma$ \textit{\textcolor{red}{exchanges}} $u$ and $v$.
\end{definition}

\begin{proposition}[{\cite[Proposition 2.8]{alon2022typical}}] \label{prop:exchangeability_components}
    Let $X$, $Y$, and $\Tilde{Y}$ be $n$-vertex graphs such that $Y$ is an edge-subgraph of $\Tilde{Y}$. Suppose that for every $\{u,v\} \in E(\Tilde{Y})$ and every bijection $\sigma$ satisfying $\{\sigma^{-1}(u), \sigma^{-1}(v)\} \in E(X)$, the vertices $u$ and $v$ are $(X,Y)$-exchangeable from $\sigma$. Then the number of connected components of $\FS(X, \Tilde{Y})$ is equal to the number of connected components of $\FS(X,Y)$.
\end{proposition}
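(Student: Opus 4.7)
The plan is to observe that $\FS(X,Y)$ and $\FS(X,\tilde{Y})$ have the same vertex set, and that since $E(Y) \subseteq E(\tilde{Y})$, every $(X,Y)$-friendly swap is also an $(X,\tilde{Y})$-friendly swap. Hence $\FS(X,Y)$ is an edge-subgraph of $\FS(X,\tilde{Y})$, and the identity map on bijections induces a well-defined surjection from the set of connected components of $\FS(X,Y)$ onto the set of connected components of $\FS(X,\tilde{Y})$. The whole content of the proposition is then the \emph{injectivity} of this map, i.e.\ showing that any two bijections $\sigma,\tau$ that lie in a common component of $\FS(X,\tilde{Y})$ already lie in a common component of $\FS(X,Y)$.

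To prove this, I would take a path $\sigma = \sigma_0, \sigma_1, \dots, \sigma_k = \tau$ in $\FS(X,\tilde{Y})$ and aim to replace each edge by an $(X,Y)$-walk between its endpoints. For each $i$, the step $\sigma_{i-1} \to \sigma_i$ is an $(X,\tilde{Y})$-friendly swap along some edge $\{a_i,b_i\} \in E(X)$ with $\{\sigma_{i-1}(a_i),\sigma_{i-1}(b_i)\} \in E(\tilde{Y})$, and a direct unpacking of the definition shows that $\sigma_i = (u_i\ v_i) \circ \sigma_{i-1}$, where $u_i = \sigma_{i-1}(a_i)$ and $v_i = \sigma_{i-1}(b_i)$. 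The pair $\{u_i,v_i\}$ therefore satisfies the hypotheses of the proposition from $\sigma_{i-1}$: it is an edge of $\tilde{Y}$, and $\{\sigma_{i-1}^{-1}(u_i),\sigma_{i-1}^{-1}(v_i)\} = \{a_i,b_i\} \in E(X)$. By assumption, $u_i$ and $v_i$ are $(X,Y)$-exchangeable from $\sigma_{i-1}$, so there exists a sequence $\Sigma_i$ of $(X,Y)$-friendly swaps whose application to $\sigma_{i-1}$ produces $(u_i\ v_i) \circ \sigma_{i-1} = \sigma_i$.

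Concatenating $\Sigma_1, \Sigma_2, \dots, \Sigma_k$ then yields a sequence of $(X,Y)$-friendly swaps that transforms $\sigma$ into $\tau$, so $\sigma$ and $\tau$ are in the same component of $\FS(X,Y)$, completing the proof. I do not expect any serious obstacle here: the argument is essentially the tautological observation that the exchangeability hypothesis is exactly what is needed to simulate each individual $(X,\tilde Y)$-swap by an $(X,Y)$-walk, so the work reduces to carefully matching notation (in particular verifying $\sigma_i = (u_i\ v_i) \circ \sigma_{i-1}$) and stringing the simulations together.
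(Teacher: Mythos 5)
Your proof is correct, and the key computation $\sigma_i = (u_i\ v_i)\circ\sigma_{i-1}$ together with the observation that the hypothesis applies to each intermediate bijection $\sigma_{i-1}$ is exactly what is needed. Note that this proposition is cited from Alon, Defant, and Kravitz \cite[Proposition 2.8]{alon2022typical} and is not proved in the present paper, but your argument --- inclusion of $\FS(X,Y)$ as a spanning edge-subgraph of $\FS(X,\tilde{Y})$, then simulating each $(X,\tilde{Y})$-swap by an $(X,Y)$-walk supplied by the exchangeability hypothesis --- is the standard proof and matches the cited source.
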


Finally, we introduce a result of \cite{zhu2023evacuating}, which will be our main tool in proving Theorem \ref{thm:random_edge_subgraphs}.

\begin{definition}[\cite{milojevic2023connectivity}]
    A path $v_1, v_2, \dots, v_k$ in a graph is a \textit{\textcolor{red}{$k$-bridge}} if each edge in the path is a cut edge, $v_2, \dots, v_{k-1}$ have degree $2$ in the graph, and $v_1$ and $v_k$ do not have degree $1$.
\end{definition}

\begin{theorem}[{\cite[Theorem 1.7]{zhu2023evacuating}}] \label{thm:bipartite_fs_connectivity}
    Suppose $r \geq 5$. Let $X$ be an edge-subgraph of $K_{r,r}$. It holds that $\FS(X, K_{r,r})$ has exactly two connected components if and only if $X$ is connected, is not a cycle, and does not contain an $r$-bridge.
\end{theorem}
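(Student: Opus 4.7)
The plan is to prove the two directions separately. For necessity, I would show that each failure of the three conditions on $X$ creates an extra invariant beyond the parity invariant of Proposition \ref{prop:two_complete_bipartite}. If $X$ is disconnected, then for any configuration $\sigma$ the \emph{multiset} of tokens placed on each component of $X$ is preserved under $(X, K_{r,r})$-friendly swaps, immediately giving many components. If $X = C_{2r}$, then the cyclic order (up to rotation and reflection) of tokens around the cycle is preserved, since a friendly swap on two adjacent positions of a cycle merely transposes consecutive elements in that cyclic order, but only if they are adjacent in $K_{r,r}$ — and since $K_{r,r}$ is bipartite, consecutive positions hold tokens on opposite sides, so the relative cyclic order is preserved up to reversing orientation. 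If $X$ has an $r$-bridge $v_1 v_2 \cdots v_r$, then the two ``sides'' obtained by deleting the bridge must each receive $r$ tokens (counting bridge interiors appropriately), and the fact that each interior vertex $v_i$ has degree exactly $2$ forces tokens crossing the bridge to pass one at a time, producing a finer invariant counting how many tokens from each side have migrated.

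For sufficiency, the aim is to show that if $X$ is connected, not a cycle, and has no $r$-bridge, then any two bijections with the same parity lie in the same component of $\FS(X, K_{r,r})$. My approach would be induction, most naturally on $r$ (or equivalently on $|V(X)|/2$). The inductive step would proceed by selecting a vertex $v \in V(X)$ to ``peel off'': fix a choice of token placed at $v$, swap it around to some canonical position, and then apply the inductive hypothesis to $X \setminus \{v\}$. To make Proposition \ref{prop:exchangeability_components} useful, I would show that every edge $\{u,w\} \in E(K_{r,r})$ is exchangeable from every compatible bijection by exhibiting a short cycle in $X$ around which the pair $u,w$ can be rotated; the hypothesis that no $r$-bridge exists is exactly what guarantees the availability of such alternative routes, preventing tokens from getting ``stuck'' on a long thin passage.

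The main obstacle, and where I expect Zhu's argument to be most intricate, is controlling the inductive step in the presence of the structural hypotheses: deleting a vertex or edge from $X$ may create an $r$-bridge in the smaller graph, may disconnect it, or may turn it into a cycle, even if $X$ itself satisfies the hypotheses. The heart of the proof should therefore be a structural lemma characterizing connected bipartite graphs without $r$-bridges robustly enough that one can always find either an ``easy'' vertex to peel off (one whose removal preserves all three conditions) or a special local configuration (e.g.\ a short cycle through two chosen positions) allowing any required token exchange to be realized directly. Balancing these two cases — the generic inductive step and the boundary cases where the hypotheses are nearly violated — is where the real work lies.
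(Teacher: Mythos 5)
Note first that the paper does not prove this theorem: it is imported verbatim from \cite[Theorem 1.7]{zhu2023evacuating} and used as a black box in Section \ref{sec:random_edge_subgraphs}. There is therefore no in-paper proof to compare your sketch against; I can only assess it on its own terms, and on those terms it is not yet a proof.

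On the necessity side, the disconnected case is fine, but the cycle invariant is stated incoherently. You assert that the cyclic order of the tokens is preserved while also observing that a friendly swap transposes two consecutive tokens --- these two statements contradict each other, since such a transposition changes that cyclic order. The invariant you actually want is finer: a swap exchanges one $A_Y$-token with one adjacent $B_Y$-token, so the cyclic order of the $A_Y$-tokens around the cycle and the cyclic order of the $B_Y$-tokens around the cycle are each separately preserved; as written your invariant is not well defined. The $r$-bridge case is not really argued at all: ``how many tokens from each side have migrated'' is not an invariant (tokens may cross and return), and identifying what genuinely is conserved when tokens must squeeze single-file through a length-$r$ corridor of degree-$2$ vertices is precisely the content of this part of the proof. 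On the sufficiency side, your plan (induct, peel off a vertex, establish exchangeability via Proposition \ref{prop:exchangeability_components}) is a plausible outline, but you explicitly name the obstruction --- deleting a vertex may disconnect $X$, create an $r$-bridge, or turn $X$ into a cycle --- and then do not resolve it; that structural control is the heart of Zhu's argument and cannot be left as a remark. As submitted the proposal identifies the right landmarks but completes neither direction.
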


\section{Minimum Degree} \label{sec:minimum_degree}

Theorem \ref{thm:minimum_degree} is given by \cite[Theorem 1.10]{bangachev2022asymmetric} for even values of $r$, so we assume throughout this section (and the statements and proofs of all results within it), unless stated otherwise, that $r \geq 5$ and is odd. To begin, we establish the following generalization of \cite[Proposition 6.2]{alon2022typical}. The proof of this lemma borrows upon the ideas introduced in the proofs of \cite[Proposition 6.2]{alon2022typical} and \cite[Lemma 6.2]{bangachev2022asymmetric}. However, in order to prove this sharper statement, we will need to introduce and study a few additional cases.

\begin{lemma} \label{lem:exchangeability}
    Let $X$ and $Y$ be edge-subgraphs of $K_{r,r}$ such that $\delta(X) \geq \delta(Y)$ and $\delta(X) + \delta(Y) \geq (3r+1)/2$. Let $\sigma: V(X) \to V(Y)$ be a bijection. If $u,v$ are in different partite sets of $Y$ and are such that $\{\sigma^{-1}(u), \sigma^{-1}(v)\} \in E(X)$, then $u$ and $v$ are $(X,Y)$-exchangeable from $\sigma$. 
\end{lemma}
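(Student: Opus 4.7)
The plan is to split on whether $\{u,v\} \in E(Y)$. Set $a = \sigma^{-1}(u)$ and $b = \sigma^{-1}(v)$; if $\{u, v\} \in E(Y)$, then a single $(X,Y)$-friendly swap on the $X$-edge $\{a, b\}$ produces $(u\ v)\circ\sigma$ and we are done. Henceforth assume $\{u, v\} \notin E(Y)$. Writing $L_X, R_X$ (resp.\ $L_Y, R_Y$) for the two partite sets of $X$ (resp.\ $Y$), the edge $\{a,b\}$ forces $a, b$ to lie in opposite parts of $X$, and $u, v$ lie in opposite parts of $Y$ by hypothesis; I fix the orientation $a \in L_X$, $b \in R_X$, $u \in L_Y$, $v \in R_Y$, with the opposite orientation handled by relabeling.

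The heart of the argument is to find a $K_{2,3}$-subgraph $H$ of $X$ with $L$-side $\{a, d\}$ and $R$-side $\{b, c_1, c_2\}$, chosen so that the tokens $z := \sigma(d)$, $w_1 := \sigma(c_1)$, $w_2 := \sigma(c_2)$ satisfy $z, w_1 \in L_Y$, $w_2 \in R_Y$, and the five adjacencies $\{z, v\}, \{v, w_1\}, \{w_1, w_2\}, \{w_2, z\}, \{u, w_2\}$ all lie in $E(Y)$. Once $H$ is in hand, the seven-step sequence of friendly swaps on the $X$-edges
\[ \{d,b\},\ \{d,c_1\},\ \{d,c_2\},\ \{d,b\},\ \{a,b\},\ \{a,c_2\},\ \{a,c_1\} \]
is easily checked step-by-step to have net effect the transposition $(u\ v)$, establishing that $u$ and $v$ are $(X,Y)$-exchangeable from $\sigma$. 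This template is a $K_{2,3}$-based refinement of the 4-cycle gadget used in \cite[Proposition 6.2]{alon2022typical} and \cite[Lemma 6.2]{bangachev2022asymmetric}.

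The main work is a counting argument for the existence of $H$ under $\delta(X)+\delta(Y) \geq (3r+1)/2$ with $\delta(X) \geq \delta(Y)$. For any $d \in L_X$ one has $|N_X(a) \cap N_X(d)| \geq 2\delta(X) - r$ by inclusion--exclusion inside $R_X$, and analogous bounds hold on the $Y$-side. The strategy is to pick $d \in N_X(b) \setminus \{a\}$ with $\sigma(d) \in L_Y \cap N_Y(v)$, then $c_2 \in N_X(a) \cap N_X(d)$ with $\sigma(c_2) \in R_Y \cap N_Y(u) \cap N_Y(z)$, and finally $c_1 \in N_X(a) \cap N_X(d) \setminus \{b,c_2\}$ with $\sigma(c_1) \in L_Y \cap N_Y(v) \cap N_Y(w_2)$. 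The bound $(3r+1)/2$ is just enough to keep each of these candidate sets nonempty in the generic case.

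The main obstacle is that this counting is tight---exactly one below the $(3r+3)/2$ of \cite[Lemma 6.2]{bangachev2022asymmetric}---so in certain extremal configurations (for instance when the alignment parameter $|L_X \cap \sigma^{-1}(L_Y)|$ sits at one of its extremes, or when $N_X(a)$ and $N_X(b)$ align poorly with the relevant token sets in $Y$) the greedy selection above misses by a single vertex. These are the few additional cases the paper signals: in each such case I plan to fall back on a symmetric variant (a $K_{2,3}$ rooted in $R_X$ rather than $L_X$, or a relabeling of the partite sets of $Y$) or on a slight enlargement of $H$ (for example adding a pendant vertex to $c_i$ holding a well-placed token) and verify that an analogous swap sequence still realizes the transposition. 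The asymmetric hypothesis $\delta(X) \geq \delta(Y)$ is what provides the extra slack needed to ensure at least one such template applies for every $\sigma$.
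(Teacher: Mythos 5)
Your $K_{2,3}$-gadget and its $7$-step swap sequence do realize the transposition $(u\ v)$ when the five required $Y$-adjacencies are present; I checked that. But the proposal has a genuine gap in how you propose to \emph{find} the gadget, and the gap is precisely where all the work in the paper lies.

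The central problem is that your counting argument only controls objects on one side at a time: you know $|N_X(b)| \geq \delta(X)$ and $|N_Y(v)| \geq \delta(Y)$, but you need a vertex $d \in N_X(b)$ whose $\sigma$-image lies in $N_Y(v) \subseteq L_Y$. This requires control of the ``alignment'' $|L_X \cap \sigma^{-1}(L_Y)|$, which for a general bijection $\sigma$ is completely uncontrolled. For instance, if $\sigma(L_X) \cap L_Y = \{u\}$ then $L_X \cap \sigma^{-1}(N_Y(v)) \subseteq \{a\}$, so no admissible $d \neq a$ exists and the greedy selection never starts. You do flag alignment as an ``extremal configuration,'' but your fallback options (rooting $H$ in $R_X$, relabeling partite sets, adding a pendant) do not address this: the bad case above is already symmetric under those relabelings.

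The paper's proof resolves exactly this via a preliminary maximization step that your proposal omits. Starting from $\sigma$, it passes to a bijection $\mu$, reachable from $\sigma$ by swaps avoiding $u,v$, that maximizes $|\mu(A_X) \cap A_Y|$. In the full-alignment case $|\mu(A_X) \cap A_Y| = r$ the token sets are locked to the partite sets of $X$ and counting arguments like yours go through. When full alignment is unattainable, the maximality of $\mu$ is itself the lever: any candidate swap that would improve the alignment is forbidden, and this forbiddance forces a large amount of structure (equality in several degree bounds, containments of certain neighborhood sets, etc.), which in turn pins down gadget vertices with the right adjacencies. Even then, because the hypothesis $\delta(X)+\delta(Y) \geq (3r+1)/2$ is tight, many of the resulting subcases have no short gadget: the paper's Tables \ref{tab:subcase_1.1} and \ref{tab:subcase_1.2} record computer-found swap sequences of $10$--$55$ steps, and the various Case $2$ subcases use bespoke sequences and further vertex-hunting arguments. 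Your plan to ``verify that an analogous swap sequence still realizes the transposition'' after ``a slight enlargement of $H$'' is not a substitute for this; these subcases are the bulk of the proof and are not small perturbations of the generic $K_{2,3}$ template.

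In short: the approach is in the right spirit (gadget plus counting, as in \cite{alon2022typical} and \cite{bangachev2022asymmetric}), but without the $\mu$-maximization reduction your counting cannot get off the ground for a general $\sigma$, and the hand-waved ``extremal configurations'' are exactly where the real difficulty sits.
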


\begin{proof}
    We assume that $\{u,v\} \notin E(Y)$, as $u$ and $v$ are trivially $(X,Y)$-exchangeable otherwise. We also assume that $\delta(X) + \delta(Y) = (3r+1)/2$, since the lemma follows from \cite[Section 6]{bangachev2022asymmetric} otherwise. For later use, we note that the condition $r \geq \delta(X) \geq \delta(Y)$ implies that 
    \begin{align} \label{eq:delta_ineq}
        & \delta(X) \geq \lceil (3r+1)/4 \rceil,
        & \delta(Y) \geq (r+1)/2.
    \end{align}
    Let $\{A_X, B_X\}$ and $\{A_Y, B_Y\}$ respectively denote the bipartitions of $X$ and $Y$. Without loss of generality, we may assume that $u \in A_Y$ and $v \in B_Y$. Let $u' = \sigma^{-1}(u)$ and $v' = \sigma^{-1}(v)$. Our goal is to show that $\sigma$ and $(u \ v) \circ \sigma$ are in the same connected component of $\FS(X, Y)$. We may thus assume that the partite set of $X$ containing $v'$ contains at least $(r+1)/2$ elements of $\sigma^{-1}(B_Y)$, as we may simply switch the roles of $\sigma$ and $(u \ v) \circ \sigma$ otherwise. Without loss of generality, we may assume that $u' \in A_X$ and $v' \in B_X$, so that 
    \begin{align} \label{eq:wlog_assumption}
        |\sigma(B_X) \cap B_Y| \geq (r+1)/2.
    \end{align} 
    Now, let $\mu: V(X) \to V(Y)$ be a bijection such that
    \begin{itemize}
        \item $\mu$ can be obtained from $\sigma$ by applying a sequence of swaps not involving $u$ or $v$;
        \item $|\mu(A_X) \cap A_Y|$ is maximal amongst all bijections in the same connected component as $\sigma$.
    \end{itemize}
    The first condition implies that $\mu(u') = u$ and $\mu(v') = v$. Let $\Sigma$ be a sequence of $(X,Y)$-friendly swaps not involving $u$ or $v$ that transforms $\sigma$ into $\mu$. We will demonstrate that there is a sequence $\Tilde{\Sigma}$ of $(X,Y)$-friendly swaps such that applying $\Tilde{\Sigma}$ to $\mu$ exchanges $u$ and $v$. It will then follow that $\Sigma^* = \Sigma, \Tilde{\Sigma}, \rev(\Sigma)$ is a sequence of $(X,Y)$-friendly swaps such that applying $\Sigma^*$ to $\sigma$ exchanges $u$ and $v$. We break into two cases.

    \medskip
    
    \paragraph{\textbf{Case 1: We have $|\mu(A_X) \cap A_Y| = r$.}} 

    Here, we have that $\mu(A_X) = A_Y$ and $\mu(B_X) = B_Y$. Since
    \begin{align*}
        & |B_X \setminus N(u')| \leq r - \delta(X),
        & |B_Y \setminus N(u)| \leq r - \delta(Y),
    \end{align*}
    and we have that
    \begin{align*}
        (r - \delta(X)) + (r - \delta(Y)) = 2r - (3r+1)/2 = (r-1)/2 < r,
    \end{align*}
    there exists $w \in N(u)$ such that $w' = \mu^{-1}(w) \in N(u')$. Since $|N(v') \cap N(w')| \geq 2\delta(X)-r$ and $|N(v) \cap N(w)| \geq 2\delta(Y)-r$ imply
    \begin{align*}
        |A_X \setminus (N(v') \cap N(w'))| & \leq r - (2\delta(X)-r) = 2r-2\delta(X), \\
        |A_Y \setminus (N(v) \cap N(w))| & \leq r - (2\delta(Y)-r) = 2r-2\delta(Y),
    \end{align*}
    respectively, and 
    \begin{align*}
        (2r-2\delta(X)) + (2r-2\delta(Y)) = 4r - 2(\delta(X)+\delta(Y)) = 4r-(3r+1) = r-1 < r,
    \end{align*}
    there exists $x \in N(v) \cap N(w) \subseteq A_Y$ such that $x' = \mu^{-1}(x) \in N(v') \cap N(w') \subseteq A_X$. Also, $x \neq u$, since $u \notin N(v)$. We denote
    \begin{gather*}
        D_A = A_X \setminus (N(v') \cap N(w')), \quad D_B = B_X \setminus (N(u') \cap N(x')), \\
        E_A = A_X \setminus (D_A \cup \{u', x'\}) = N(v') \cap N(w') \setminus \{u', x'\}, \\
        E_B = B_X \setminus (D_B \cup \{v', w'\}) = N(u') \cap N(x') \setminus \{v', w'\}.
    \end{gather*}
    Assume that there exists $y \in N(v) \cap N(w)$ such that $\mu^{-1}(y) \in N(v') \cap N(w')$ and $y \neq x$. A visualization of these vertices and edges is given in Figure \ref{fig:bipartite_0}. Applying the sequence
    \begin{align*}
        \Tilde{\Sigma} = xv, yw, xw, uw, yw, xw, xv, yv, yw
    \end{align*}
    to $\mu$ exchanges $u$ and $v$. An entirely symmetric argument yields that if there exists $z \in N(u) \cap N(x)$ such that $\mu^{-1}(z) \in N(u') \cap N(x')$ and $z \neq w$, then we may exchange $u$ and $v$ from $\sigma$.
    \begin{figure}[ht]
        \centering
        \includegraphics[width=0.33\textwidth]{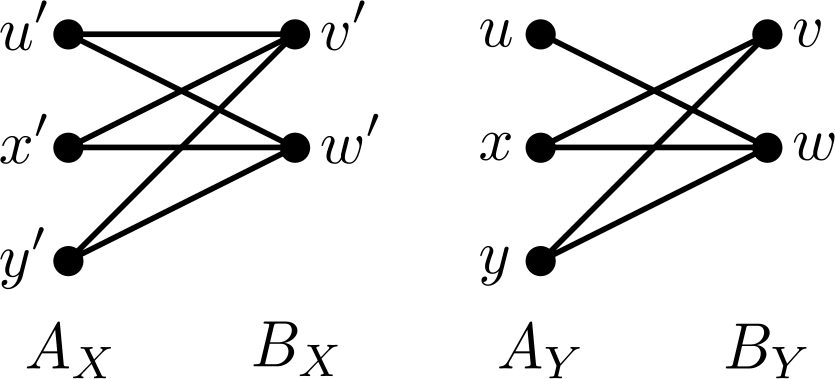}
        \caption{The vertices and edges used in Case 1.}
        \label{fig:bipartite_0}
    \end{figure}
    
    Now assume that there is no such $y$ and no such $z$. The assumption implies that
    \begin{align} \label{eq:case_1_subset_inclusions}
        & N(v) \cap N(w) \setminus \{x\} \subseteq \mu(D_A),
        & N(u) \cap N(x) \setminus \{w\} \subseteq \mu(D_B).
    \end{align}
    Furthermore, we have that
    \begin{gather}
        2\delta(Y) - r - 1 \leq |N(v) \cap N(w) \setminus \{x\}| \leq |\mu(D_A)| \leq 2r-2\delta(X), \label{eq:case_1_eq_1} \\
        2(\delta(X)+\delta(Y)) = 3r+1 \implies 2\delta(Y) - r - 1 = 2r-2\delta(X). \label{eq:case_1_eq_2}
    \end{gather}
    It follows from \eqref{eq:case_1_eq_2} that all inequalities in \eqref{eq:case_1_eq_1} are equalities, so the first subset inclusion in \eqref{eq:case_1_subset_inclusions} holds with equality. We may argue entirely analogously to study $N(u) \cap N(x) \setminus \{w\}$. Altogether,
    \begin{align} \label{eq:case_1_deductions_1}
        N(v) \cap N(w) \setminus \{x\} = \mu(D_A), \quad N(u) \cap N(x) \setminus \{w\} = \mu(D_B), \quad |D_A| = |D_B| = 2r-2\delta(X).
    \end{align}
    The final statement of \eqref{eq:case_1_deductions_1} easily implies $N(v') \cup N(w') = A_X$ and $N(u') \cup N(x') = B_X$, so that
    \begin{align} \label{eq:case_1_deductions_2}
        & N(v') \bigtriangleup N(w') = D_A,
        & N(u') \bigtriangleup N(x') = D_B.
    \end{align}
    It is also easy to see that
    \begin{align} \label{eq:E_A_and_E_B}
        & (N(v) \bigtriangleup N(w)) \setminus \{u\} \subseteq \mu(E_A),
        & (N(u) \bigtriangleup N(x)) \setminus \{v\} \subseteq \mu(E_B).
    \end{align}
    Furthermore, we have that
    \begin{gather} 
        2r-2\delta(Y) - 1 \leq |(N(v) \triangle N(w)) \setminus \{u\}| \leq |\mu(E_A)| = r-(2+|D_A|) = 2\delta(X)-r-2, \label{eq:case_1_eq_3} \\
        3r+1 = 2(\delta(X)+\delta(Y)) \implies 2r-2\delta(Y) - 1 = r - 2 - (2r-2\delta(X)). \label{eq:case_1_eq_4}
    \end{gather}
    It follows from \eqref{eq:case_1_eq_4} that all inequalities in \eqref{eq:case_1_eq_3} are equalities, so the first subset inclusion in \eqref{eq:E_A_and_E_B} holds with equality. We may argue entirely analogously to prove that the second subset inclusion in \eqref{eq:E_A_and_E_B} also holds with equality, so we have
    \begin{align} \label{eq:vts_upon_E_sets}
        & (N(v) \triangle N(w)) \setminus \{u\} = \mu(E_A),
        & (N(u) \triangle N(x)) \setminus \{v\} = \mu(E_B).
    \end{align}
    Furthermore, both of these sets have exactly $2\delta(X)-r-2$ vertices. From here, \eqref{eq:delta_ineq} implies that both sets are nonempty. We take $y' \in E_A$, and denote $y = \mu(y')$. By \eqref{eq:delta_ineq}, we have that
    \begin{align} \label{eq:neighbors_ineq}
       |\mu(E_B \cap N(y'))| & \geq (2\delta(X)-r-2)-(r-\delta(X)) = 3\delta(X) - 2r + 1 \geq 3(3r+1)/4 - 2r + 1 > 0.
    \end{align}
    Thus, there exists $z \in \mu(E_B \cap N(y'))$. Let $z' = \mu^{-1}(z)$. We break into two subcases.

    \medskip

    \paragraph{\textbf{Subcase 1.1: We have $\delta(X) < r$.}} 
    
    Here, we also have that
    \begin{align*}
       |\mu(D_A \cap N(z'))| \geq (2r-2\delta(X))-(r-\delta(X)) = r - \delta(X) > 0.
    \end{align*}
    Therefore, we may take $s' \in D_A \cap N(z')$. We let $s = \mu(s')$. Similarly, we may take $t' \in D_B \cap N(y')$. We let $t = \mu(t')$. Now, \eqref{eq:case_1_deductions_2} and \eqref{eq:vts_upon_E_sets} imply that
    \begin{align} \label{eq:subcase_1.1_more_subcases}
        |N(y) \cap \{v,w\}| = |N(z) \cap \{u,x\}| = |N(s') \cap \{v',w'\}| = |N(t') \cap \{u',x'\}| = 1.
    \end{align}
    Figure \ref{fig:bipartite_1} depicts these vertices and edges. In the order they are listed from left to right, denote the four sets with cardinality one in \eqref{eq:subcase_1.1_more_subcases} by $S_1, S_2, S_3, S_4$. There are several further subcases induced by \eqref{eq:subcase_1.1_more_subcases}. In Table \ref{tab:subcase_1.1}, we present a sequence of $(X,Y)$-friendly swaps $\Tilde{\Sigma}$ for each of these subcases such that applying $\Tilde{\Sigma}$ to $\mu$ exchanges $u$ and $v$.\footnote{In Tables \ref{tab:subcase_1.1} and \ref{tab:subcase_1.2}, we isolate those sequences of swaps $\Tilde{\Sigma}$ corresponding to the relevant subcases in the proof of Lemma \ref{lem:exchangeability}. On the right columns, we write the unique element in each set rather than the corresponding singleton set. Blanks indicate that $\Tilde{\Sigma}$ works regardless of what the unique vertex in the set is. These sequences of $(X,Y)$-friendly swaps were all found using computer assistance. In particular, all of these sequences are shortest possible.}

    \begin{table}[ht]
        \setlength\extrarowheight{2pt}
        \centering
        \begin{tabular}{ |C{0.8\linewidth}|C{0.15\linewidth}| } 
            \hline 
            $\Tilde{\Sigma}$ & $\left( S_1, S_2, S_3, S_4 \right)$ \\ 
            \hline
            $uw$, $sv$, $sw$, $xw$, $xv$, $uw$, $uz$, $sw$, $uw$, $xw$, $ut$, $xt$, $xv$, $sv$, $yv$, $xv$, $sv$, $sw$, $xw$, $uw$, $ut$, $uz$, $xv$, $xw$, $xt$, $xv$, $xw$, $yv$, $uw$, $xv$, $xw$ & $(v,u,v',u')$ \\
            $uz$, $sw$, $yv$, $sv$, $xv$, $yv$, $xw$, $sw$, $uw$, $ut$, $uz$, $xw$, $sw$, $xt$, $xw$, $ut$, $xt$, $xv$, $xw$, $xt$, $ut$, $xv$, $uz$, $xt$, $ut$, $yv$, $xv$, $sv$, $yv$, $uw$, $sw$, $sv$, $uz$, $uw$, $sw$, $xw$, $uw$, $sw$, $uz$, $xv$, $xt$, $xw$, $yv$, $sv$, $xv$, $yv$, $xt$, $xw$, $uw$, $ut$, $xt$, $uz$, $uw$, $xv$, $xw$ & $(v,u,w',u')$ \\
            $uz$, $sw$, $yv$, $sv$, $xv$, $xw$, $sw$, $sv$, $uw$, $xw$, $ut$, $xt$, $xv$, $yv$, $sv$, $xv$, $xt$, $ut$, $xw$, $uw$, $sw$, $sv$, $uz$, $uw$, $sw$, $xw$, $xv$, $sv$, $sw$ & $(v,u,w',x')$ \\
            $uw$, $xz$, $yv$, $xv$, $sv$, $sw$, $xw$, $uw$, $xz$, $ut$, $xw$, $xt$, $xz$, $xw$, $sw$, $sv$, $xv$, $yv$, $xz$, $xt$, $xw$, $ut$, $uw$, $xv$, $xw$, $xz$, $xv$, $xw$, $yv$, $xv$, $uw$, $xw$, $xz$, $xv$, $xw$, $yv$, $uw$, $xz$, $xw$, $xv$, $xz$ & $(v,x,\underline{\hspace{3mm}},\underline{\hspace{3mm}})$ \\
            $uw$, $xv$, $xw$, $yw$, $uw$, $xw$, $yw$, $xv$, $sv$, $xw$, $sw$, $yw$, $xw$, $uw$, $yw$, $sw$, $sv$, $xv$, $xw$ & $(w,u,w',u')$ \\
            $uw$, $xz$, $xw$, $yw$, $xv$, $xz$, $uw$, $xw$, $xv$, $xz$, $xw$, $uw$, $yw$, $xw$, $xz$, $xv$, $xw$ & $(w,x,\underline{\hspace{3mm}},\underline{\hspace{3mm}})$ \\
            \hline
        \end{tabular}
        \caption{Subcase 1.1. The exchangeability of $u$ and $v$ for the possibilities not listed here follows easily from left-right symmetries with listed possibilities.}
        \label{tab:subcase_1.1}
    \end{table}

    \medskip
    
    \paragraph{\textbf{Subcase 1.2: We have $\delta(X) = r$.}} 

    Here, $\delta(Y) = (r+1)/2$. We may adapt \eqref{eq:neighbors_ineq} to observe that
    \begin{align*}
        |\mu(E_B \cap N(y')) \cap N(y)| \geq (2\delta(X)-r-2)-(r-\delta(Y)) = \delta(Y)-2 > 0,
    \end{align*}
    so we may also assume that $z \in N(y)$. We take $s' \in A_X \setminus \{u', x', y'\}$ and $t' \in B_X \setminus \{v', w', z'\}$ such that $s = \mu(s')$ and $t = \mu(t')$. It follows from the assumption of this subcase that $s' \in E_A$ and $t' \in E_B$ (here, we have that $X = K_{r,r}$). Now, \eqref{eq:vts_upon_E_sets} implies that
    \begin{align} \label{eq:subcase_1.2_more_subcases}
        |N(y) \cap \{v,w\}| = |N(z) \cap \{u,x\}| = |N(s) \cap \{v,w\}| = |N(t) \cap \{u,x\}| = 1.
    \end{align}
    Figure \ref{fig:bipartite_2} depicts these vertices and edges. In the order they are listed from left to right, denote the four sets with cardinality one in \eqref{eq:subcase_1.2_more_subcases} by $T_1, T_2, T_3, T_4$. There are several further subcases induced by \eqref{eq:subcase_1.2_more_subcases}. In Table \ref{tab:subcase_1.2}, we present a sequence of $(X,Y)$-friendly swaps $\Tilde{\Sigma}$ for each of these subcases such that applying $\Tilde{\Sigma}$ to $\mu$ exchanges $u$ and $v$.
    \begin{figure}[ht]
      \centering
      \subfloat[Subcase 1.1.]{\makebox[0.4\textwidth][c]{\includegraphics[width=0.35\textwidth]{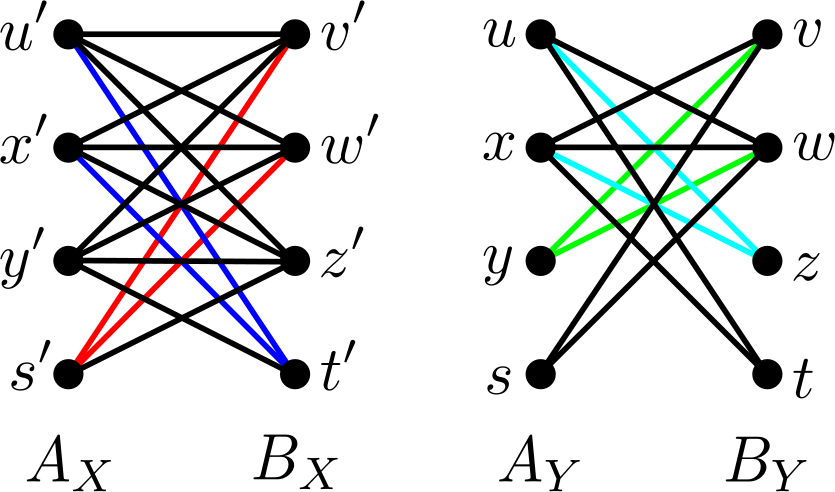}\label{fig:bipartite_1}}}
      \hfil
      \subfloat[Subcase 1.2. Here, $X = K_{r,r}$.]{\makebox[0.4\textwidth][c]{\includegraphics[width=0.15\textwidth]{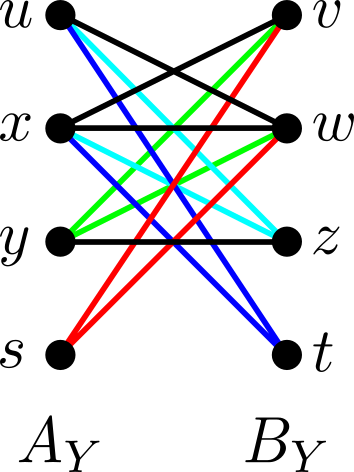}\label{fig:bipartite_2}}}
      \caption{The vertices and edges used in Subcases 1.1 and 1.2. For each subfigure, exactly one edge of a particular color is present.}
      \label{fig:case_1_subcases}
    \end{figure}

    \begin{table}[ht]
        \setlength\extrarowheight{2pt}
        \centering
        \begin{tabular}{ |C{0.8\linewidth}|C{0.15\linewidth}| } 
            \hline 
            $\Tilde{\Sigma}$ & $\left(T_1, T_2, T_3, T_4\right)$ \\ 
            \hline
            $uw$, $yz$, $sv$, $yv$, $xv$, $sv$, $xw$, $xv$, $uz$, $yz$, $yv$, $sv$, $xv$, $yv$, $yz$, $uz$, $xw$, $xv$, $yv$, $sv$, $xv$, $yz$, $xw$ & $(v,u,v,\underline{\hspace{3mm}})$ \\
            $uw$, $xt$, $xw$, $sw$, $xv$, $xt$, $uw$, $xw$, $xv$, $xt$, $xw$, $uw$, $sw$, $xw$, $xt$, $xv$, $xw$ & $(v,u,w,x)$ \\
            $xw$, $yz$, $xz$, $xv$, $yv$, $xw$, $uw$, $xz$, $xw$, $xv$, $xz$ & $(v,x,\underline{\hspace{3mm}},\underline{\hspace{3mm}})$ \\
            $uw$, $xz$, $xw$, $yw$, $yz$, $xv$, $uw$, $yw$, $xw$, $uw$, $yw$ & $(w,x,\underline{\hspace{3mm}},\underline{\hspace{3mm}})$ \\
            \hline
        \end{tabular}
        \caption{Subcase 1.2. The exchangeability of $u$ and $v$ for the possibilities not listed here follows easily from left-right symmetries with listed possibilities.}
        \label{tab:subcase_1.2}
    \end{table}

    \medskip

    \paragraph{\textbf{Case 2: We have $|\mu(A_X) \cap A_Y| < r$.}}
    
    Let $\Tilde{X} = X |_{V(X) \setminus \{u', v'\}}$ and $\Tilde{Y} = Y |_{V(Y) \setminus \{u,v\}}$. Let the partite sets of $\Tilde{X}$ corresponding to $A_X$ and $B_X$ respectively be $A_{\Tilde{X}}$ and $B_{\Tilde{X}}$, and let the partite sets of $\Tilde{Y}$ corresponding to $A_Y$ and $B_Y$ respectively be $A_{\Tilde{Y}}$ and $B_{\Tilde{Y}}$. We denote $s = |\mu(A_{\Tilde{X}}) \cap A_{\Tilde{Y}}|$. It follows from \eqref{eq:wlog_assumption} and the assumption for this case that $(r-1)/2 \leq s \leq r-1$. It also follows that
    \begin{align*}
        |\mu(A_{\Tilde{X}}) \cap B_{\Tilde{Y}}| = |\mu(B_{\Tilde{X}}) \cap A_{\Tilde{Y}}| = r-1-s, \quad |\mu(B_{\Tilde{X}}) \cap B_{\Tilde{Y}}| = s.
    \end{align*}
    There cannot exist vertices $p \in \mu(A_{\Tilde{X}}) \cap B_{\Tilde{Y}}$ and $q \in \mu(B_{\Tilde{X}}) \cap A_{\Tilde{Y}}$ satisfying $\{p,q\} \in E(Y)$ and $\{\mu(p), \mu(q)\} \in E(X)$, since the $(X,Y)$-friendly swap $pq$ would then result in a bijection contradicting the maximality of $|\mu(A_X) \cap A_Y|$. Let $m$ be the number of edges between $\mu(A_{\Tilde{X}}) \cap B_{\Tilde{Y}}$ and $\mu(B_{\Tilde{X}}) \cap A_{\Tilde{Y}}$, so that there are at most $(r-1-s)^2-m$ edges between $\mu^{-1}(\mu(A_{\Tilde{X}}) \cap B_{\Tilde{Y}}) = A_{\Tilde{X}} \cap \mu^{-1}(B_{\Tilde{Y}})$ and $\mu^{-1}(\mu(B_{\Tilde{X}}) \cap A_{\Tilde{Y}}) = B_{\Tilde{X}} \cap \mu^{-1}(A_{\Tilde{Y}})$.

    We let $a$ be a vertex in $\mu(A_{\Tilde{X}}) \cap B_{\Tilde{Y}}$ with the fewest neighbors in $\mu(B_{\Tilde{X}}) \cap A_{\Tilde{Y}}$, and let $b'$ be a vertex in $B_{\Tilde{X}} \cap \mu^{-1}(A_{\Tilde{Y}})$ with the fewest neighbors in $A_{\Tilde{X}} \cap \mu^{-1}(B_{\Tilde{Y}})$. We let $a' = \mu^{-1}(a) \in A_X$ and $b = \mu(b) \in A_{\Tilde{Y}}$. It follows that
    \begin{align}
        & |N(a) \cap (\mu(B_{\Tilde{X}}) \cap A_{\Tilde{Y}})| \leq \frac{m}{r-1-s} =: t, \label{eq:a_neighbor_set} \\
        & |N(b') \cap (A_{\Tilde{X}} \cap \mu^{-1}(B_{\Tilde{Y}}))| \leq \frac{(r-1-s)^2-m}{r-1-s} = (r-1-s) - \frac{m}{r-1-s} = r-1-s-t. \label{eq:b_neighbor_set}
    \end{align}
    We observe that
    \begin{align} 
        |N(a') \cap (B_{\Tilde{X}} \cap \mu^{-1}(B_{\Tilde{Y}}))| & = |N(a') \cap B_{\Tilde{X}}| - |N(a') \cap (B_{\Tilde{X}} \cap \mu^{-1}(A_{\Tilde{Y}}))| \nonumber \\
        & \geq (\delta(X)-1) - |B_{\Tilde{X}} \cap \mu^{-1}(A_{\Tilde{Y}})| = (\delta(X) - 1) - (r-1-s), \label{eq:case_2_eq_1}
    \end{align}
    and that \eqref{eq:case_2_eq_1} may hold with equality only if $v' \in N(a')$ and $B_{\Tilde{X}} \cap \mu^{-1}(A_{\Tilde{Y}}) \subseteq N(a')$. Similarly,
    \begin{align} \label{eq:case_2_eq_2}
        |N(b) \cap (\mu(B_{\Tilde{X}}) \cap B_{\Tilde{Y}})| & = |N(b) \cap B_{\Tilde{Y}}| - |N(b) \cap (B_{\Tilde{Y}} \cap \mu(A_{\Tilde{Y}}))| \nonumber \\
        & \geq (\delta(Y)-1) - |B_{\Tilde{Y}} \cap \mu(A_{\Tilde{Y}})| \geq (\delta(Y) - 1) - (r-1-s),
    \end{align}
    and \eqref{eq:case_2_eq_2} may hold with equality only if $v \in N(b)$ and $B_{\Tilde{Y}} \cap \mu(A_{\Tilde{X}}) \subseteq N(b)$. If both \eqref{eq:case_2_eq_1} and \eqref{eq:case_2_eq_2} held with equality, then we would have that $\{a', b'\} \in E(X)$ and $\{a, b\} \in E(Y)$, and the $(X,Y)$-friendly swap $ab$ would result in a bijection contradicting the maximality of $|\mu(A_X) \cap A_Y|$. Thus, we assume that either \eqref{eq:case_2_eq_1} or \eqref{eq:case_2_eq_2} is strict, so that
    \begin{align*}
        & |N(a') \cap (B_{\Tilde{X}} \cap \mu^{-1}(B_{\Tilde{Y}}))| + |N(b) \cap (\mu(B_{\Tilde{X}}) \cap B_{\Tilde{Y}})| \\
        & > (\delta(X) - 1) - (r-1-s) + (\delta(Y) - 1) - (r-1-s) \\
        & = (3r+1)/2 - 2r + 2s \geq (1-r)/2 + (r-1)/2 + s = |B_{\Tilde{X}} \cap \mu^{-1}(B_{\Tilde{Y}})|.
    \end{align*}
    It follows that there exists $c' \in B_{\Tilde{X}} \cap \mu^{-1}(B_{\Tilde{Y}})$ such that $\{a', c'\} \in E(X)$ and $\{b, \mu(c')\} \in E(Y)$. Arguing similarly, we may conclude that there exists $d' \in A_{\Tilde{X}} \cap \mu^{-1}(A_{\Tilde{Y}})$ such that $\{b', d'\} \in E(X)$ and $\{a, \mu(d')\} \in E(Y)$. We let $c = \mu(c')$ and $d = \mu(d')$.\footnote{Thus far, the argument in Case 2 has followed from tracing the proof of \cite[Lemma 6.2]{bangachev2022asymmetric}. However, note that what we denote by $d$ is not the same as what they denote by $d$, which we later denote instead by $w$.}

    Now, we observe that
    \begin{align*}
        |(N(a) \cap A_{\Tilde{Y}}) \cap (N(c) \cap A_{\Tilde{Y}})| \geq 2(\delta(Y)-1) - |A_{\Tilde{Y}}| = 2(\delta(Y)-1) - (r-1),
    \end{align*}
    and since $a$ has at most $t$ neighbors in $\mu(B_{\Tilde{X}}) \cap A_{\Tilde{Y}}$, the number of common neighbors in 
    \begin{align*}
        A_{\Tilde{Y}} \setminus (\mu(B_{\Tilde{X}}) \cap A_{\Tilde{Y}}) = \mu(A_{\Tilde{X}}) \cap A_{\Tilde{Y}}
    \end{align*}
    that $a$ and $c$ have satisfies
    \begin{align} \label{eq:case_2_eq_3}
        |N(a) \cap N(c) \cap (\mu(A_{\Tilde{X}}) \cap A_{\Tilde{Y}})| \geq 2(\delta(Y)-1) - (r-1) - t.
    \end{align}
    Furthermore, \eqref{eq:case_2_eq_3} may hold with equality only if $\{a, c\} \subseteq N(u)$ and \eqref{eq:a_neighbor_set} holds with equality. Similarly, we observe that
    \begin{align*}
        |(N(b') \cap A_{\Tilde{X}}) \cap (N(c') \cap A_{\Tilde{X}})| \geq 2(\delta(X)-1) - |A_{\Tilde{X}}| = 2(\delta(X)-1) - (r-1),
    \end{align*}
    and since $b'$ has at most $r-1-s-t$ neighbors in $A_{\Tilde{X}} \cap \mu^{-1}(B_{\Tilde{Y}})$, the number of common neighbors in 
    \begin{align*}
        A_{\Tilde{X}} \setminus (A_{\Tilde{X}} \cap \mu^{-1}(B_{\Tilde{Y}})) = A_{\Tilde{X}} \cap \mu^{-1}(A_{\Tilde{Y}})
    \end{align*}
    that $b'$ and $c'$ have satisfies
    \begin{align} \label{eq:case_2_eq_4}
        |N(b') \cap N(c') \cap (A_{\Tilde{X}} \cap \mu^{-1}(A_{\Tilde{Y}}))| \geq 2(\delta(X)-1) - (r-1) - (r-1-s-t).
    \end{align}
    Furthermore, \eqref{eq:case_2_eq_4} may hold with equality only if $\{b', c'\} \subseteq N(u')$ and \eqref{eq:b_neighbor_set} holds with equality. Now assume (towards a contradiction) that either \eqref{eq:case_2_eq_3} or \eqref{eq:case_2_eq_4} is strict. Then
    \begin{align*}
        & |N(a) \cap N(c) \cap (\mu(A_{\Tilde{X}}) \cap A_{\Tilde{Y}})| + |N(b') \cap N(c') \cap (A_{\Tilde{X}} \cap \mu^{-1}(A_{\Tilde{Y}}))| \\
        & > (2(\delta(Y)-1) - (r-1) - t) + (2(\delta(X)-1) - (r-1) - (r-1-s-t)) \\
        & = 2(\delta(X)+\delta(Y)) - 3r - 1 + s = (3r+1) - (3r+1) + s = |\mu(A_{\Tilde{X}}) \cap A_{\Tilde{Y}}|.
    \end{align*}
    Thus, there exists $w \in \mu(A_{\Tilde{X}}) \cap A_{\Tilde{Y}}$ such that, letting $w' = \mu^{-1}(w)$, $\{a,c\} \subseteq N(w)$ and $\{b', c'\} \subseteq N(w')$. Figure \ref{fig:bipartite_3} depicts these vertices and edges. Now, the sequence of $(X,Y)$-friendly swaps 
    \begin{align*}
        cw, aw, bc
    \end{align*}
    results in a bijection contradicting the maximality of $|\mu(A_X) \cap A_Y|$.  
    \begin{figure}[ht]
        \centering
        \includegraphics[width=0.33\textwidth]{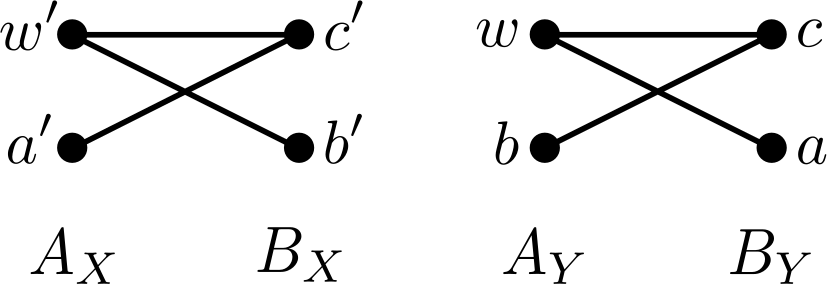}
        \caption{The vertices and edges used to raise a contradiction.}
        \label{fig:bipartite_3}
    \end{figure}
    
    Therefore, \eqref{eq:a_neighbor_set} and \eqref{eq:b_neighbor_set} both hold with equality. This implies $\{a, c\} \subseteq N(u)$ and $\{b', c'\} \subseteq N(u')$. We may argue similarly to deduce that $\{b, d\} \subseteq N(v)$ and $\{a', d'\} \subseteq N(v')$. This also implies that for any $y \in \mu(A_{\Tilde{X}}) \cap B_{\Tilde{Y}}$ and $z \in \mu(B_{\Tilde{X}}) \cap A_{\Tilde{Y}}$, exactly one of the two edges $\{\mu^{-1}(y), \mu^{-1}(z)\}$ and $\{y,z\}$ is present. In particular, exactly one of $\{a',b'\}, \{a,b\}$ is an edge, and exactly one of $\{c',d'\}, \{c,d\}$ is an edge. Figure \ref{fig:bipartite_4} depicts these vertices and edges. Here, we have that
    \begin{align} \label{eq:case_2_preliminary_cases}
        \Tilde{\Sigma} = \begin{cases}
            uc, dv, da, bc, dc, dv, uc, ua, bv & \{a', b'\} \in E(X), \{c, d\} \in E(Y) \\
            uc, dv, bv, ua, ba, bc, da, ua, bv & \{c', d'\} \in E(X), \{a, b\}  \in E(Y) \\
            uc, dv, da, bv, ba, bc, dc, da, uc, ua, bv & \{a, b\}, \{c, d\} \in E(Y) \\
        \end{cases}
    \end{align}
    is a sequence of $(X,Y)$-friendly swaps such that, when assuming the existence of the edges in a particular row of \eqref{eq:case_2_preliminary_cases}, applying the corresponding sequence to $\mu$ exchanges $u$ and $v$. The only remaining setting is that where $\{a', b'\}, \{c', d'\} \in E(X)$. Assuming this, we break into two subcases.

    \begin{figure}[ht]
        \centering
        \includegraphics[width=0.33\textwidth]{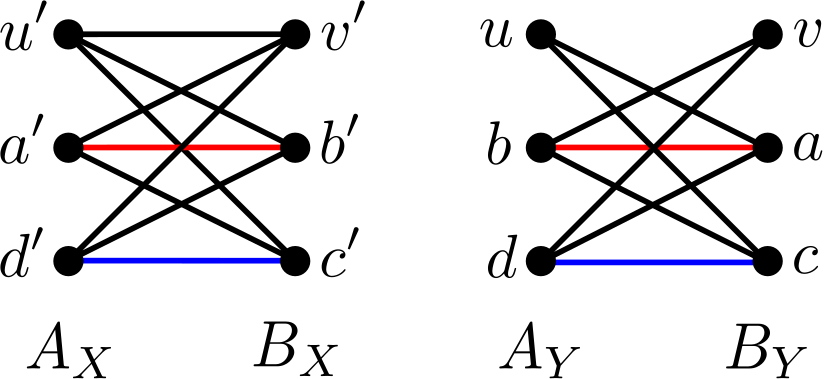}
        \caption{The vertices and edges in Case 2. Exactly one red edge and one blue edge are present.}
        \label{fig:bipartite_4}
    \end{figure}

    \medskip

    \paragraph{\textbf{Subcase 2.1: We have $s < r-2$.}} 

    The assumption of this subcase implies that there exist 
    \begin{align*}
        & w \in \mu(A_{\Tilde{X}}) \cap B_{\Tilde{Y}} \setminus \{a\},
        & x \in \mu(B_{\Tilde{X}}) \cap A_{\Tilde{Y}} \setminus \{b\}.
    \end{align*}
    Let $w' = \mu^{-1}(w)$ and $x' = \mu^{-1}(x)$. Assume that upon tracing the preceding argument with the pair $(w,x)$ playing the role of of the pair $(a,b)$, the exchangeability of $u$ and $v$ from $\mu$ still remains to be shown. Note that the argument carries over to the pair $(w,x)$ without issue, since \eqref{eq:a_neighbor_set} and \eqref{eq:b_neighbor_set} both holding with equality (which we deduced while studying the pair $(a,b)$) implies that all vertices in $\mu(A_{\Tilde{X}}) \cap B_{\Tilde{Y}}$ have the same number of neighbors in $\mu(B_{\Tilde{X}}) \cap A_{\Tilde{Y}}$, and all vertices in $B_{\Tilde{X}} \cap \mu^{-1}(A_{\Tilde{Y}})$ have the same number of neighbors in $A_{\Tilde{X}} \cap \mu^{-1}(B_{\Tilde{Y}})$. This implies the existence of 
    \begin{align*}
        & y' \in B_{\Tilde{X}} \cap \mu^{-1}(B_{\Tilde{Y}}),
        & z' \in A_{\Tilde{X}} \cap \mu^{-1}(A_{\Tilde{Y}})
    \end{align*}
    such that, letting $y = \mu(y')$ and $z = \mu(z')$, we have
    \begin{gather*}
        \{w', y'\}, \{x', z'\}, \{u', x'\}, \{u', y'\}, \{w',v'\}, \{z',v'\}, \{w', x'\}, \{y',z'\} \in E(X), \\
        \{w,z\}, \{x,y\}, \{u,w\}, \{u,y\}, \{x,v\}, \{z,v\} \in E(Y).
    \end{gather*}
    We split into three further subcases.
    \begin{figure}[ht]
      \centering
      \subfloat[Subcase 2.1.1.]{\includegraphics[width=0.32\textwidth]{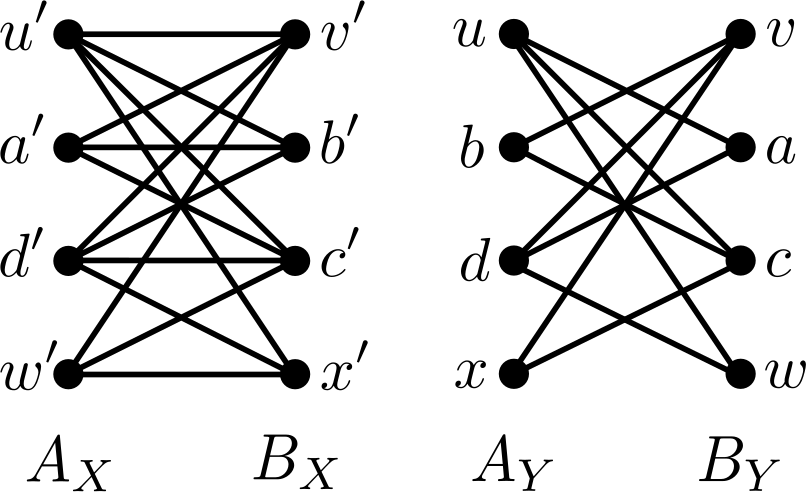}\label{fig:subcase_2.1.1}}
      \hfill
      \subfloat[Subcase 2.1.2, with $c=y$.]{\includegraphics[width=0.32\textwidth]{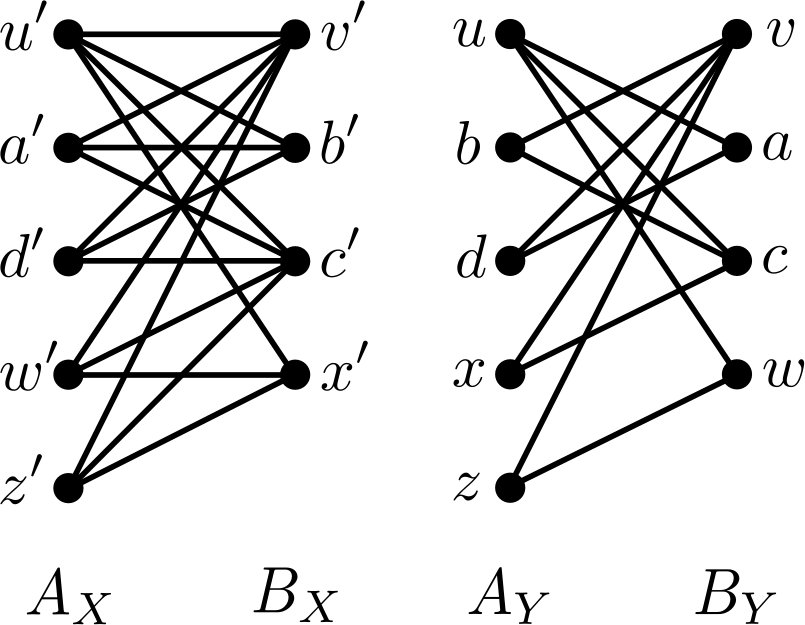}\label{fig:subcase_2.1.2}}
      \hfill
      \subfloat[Subcase 2.1.3.]{\includegraphics[width=0.32\textwidth]{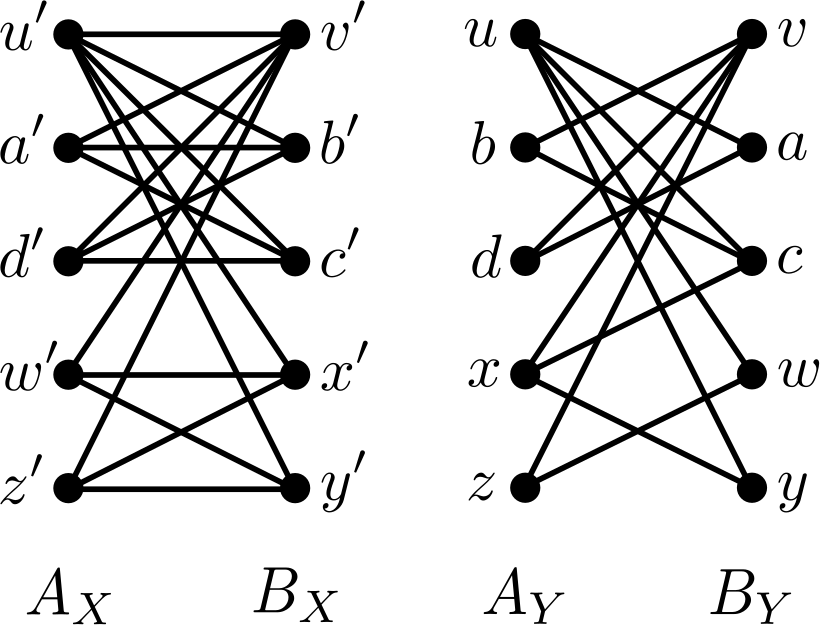}\label{fig:subcase_2.1.3}}
      \caption{The vertices and edges used in Subcase 2.1.}
      \label{fig:subcase_2.1}
    \end{figure}

    \paragraph{\textbf{Subcase 2.1.1: We have $c = y$ and $d = z$.}}

    Figure \ref{fig:subcase_2.1.1} depicts these vertices and edges. We may exchange $u$ and $v$ from $\mu$ by applying the sequence
    \begin{align*}
        \Tilde{\Sigma} = uc, dv, bv, dw, dv, xv, bv, dv, bc, bv, xc, uc, uw.
    \end{align*}

    \smallskip
    
    \paragraph{\textbf{Subcase 2.1.2: Exactly one of $c = y$ and $d = z$ holds.}}

    We assume that $c=y$ and $d \neq z$. The argument is entirely analogous if $c\neq y$ and $d=z$ holds instead. Figure \ref{fig:subcase_2.1.2} depicts these vertices and edges. We may exchange $u$ and $v$ from $\mu$ by applying the sequence
    \begin{align*}
        \Tilde{\Sigma} = uc, dv, bv, xc, xv, uw, zw, zv, bv, dv, zv, zw, uw, xv, xc, bc, bv.
    \end{align*}

    \smallskip
    
    \paragraph{\textbf{Subcase 2.1.3: We have $c \neq y$ and $d \neq z$.}}

    Figure \ref{fig:subcase_2.1.3} depicts these vertices and edges. We may exchange $u$ and $v$ from $\mu$ by applying the sequence
    \begin{align*}
        \Tilde{\Sigma} & = uc, zv, zw, xv, zv, bv, xy, ua, uw, da, zw, zv, uy, uw, uc, uy, bc, bv, \\
        & \qquad xy, xv, dv, uc, bv, zv, dv, da, bv, dv, bc, bv, zv, zw, uw, dv, da, bv, dv, bc, bv.
    \end{align*}

    \medskip

    \paragraph{\textbf{Subcase 2.2: We have $s = r-2$.}}

    The assumption for this subcase imply 
    \begin{align*}
        \mu(A_X) \cap B_Y = \{a\}, \qquad \mu(B_X) \cap A_Y = \{b\}, \qquad |\mu(A_X) \cap A_Y| = |\mu(B_X) \cap B_Y| = r-1.
    \end{align*}
    We split into two further subcases.

    \smallskip
    
    \paragraph{\textbf{Subcase 2.2.1: We have $\delta(X) < r$.}}
    
    We observe that
    \begin{gather*}
        |A_Y \setminus (N(v) \cap N(c))| \leq 2r-2\delta(Y), \quad |A_Y \setminus N(d)| \leq r-\delta(Y), \\
        |A_X \setminus (N(a') \cap N(w'))| \leq 2r-2\delta(X), \quad |A_X \setminus N(c')| \leq r-\delta(X).
    \end{gather*}
    Since we have that
    \begin{align*}
        2(r-\delta(Y)) + (r-\delta(X)) = 3r - 2(\delta(X)+\delta(Y)) + \delta(X) = \delta(X)-1 < r-1,
    \end{align*}
    there exists $w \in N(v) \cap N(c)$ such that $w' = \mu^{-1}(w) \in N(c')$. Since $u \notin N(v)$ and $d \notin N(c)$, we have $w \notin \{u, d\}$. Similarly, there exists $x \in N(u) \cap N(d) \setminus \{v, c\}$ such that $x' = \mu^{-1}(x) \in N(d')$. Also, since 
    \begin{align*}
        2(r-\delta(X)) + (r-\delta(Y)) = 3r - 2(\delta(X)+\delta(Y)) + \delta(Y) = \delta(Y)-1 \leq \delta(X)-1 < r-1,
    \end{align*}
    there exists $y \in N(d)$ such that $y' = \mu^{-1}(y) \in N(a') \cap N(w')$. Figure \ref{fig:subcase_2.2.1} depicts these vertices and edges. If $y \in \{v,x\}$, we may exchange $u$ and $v$ from $\mu$ by applying the sequence
    \begin{align*}
        \Tilde{\Sigma} = \begin{cases}
            dv, wc, bv, da, dv, ua, wv, da, dv, uc, bc, bv, wv, dv, wc, uc, da, wv, dv, wc, bv & y = v, \\
            uc, dx, bc, ua, uc, ux, ua, uc, da, ua, dv, da, dx, dv, ux, ua, bv & y = x.
        \end{cases}
    \end{align*}
    If $y \notin \{v,x\}$, we may exchange $u$ and $v$ from $\mu$ by applying the sequence
    \begin{align*}
        \Tilde{\Sigma} & = uc, bc, ua, ad, bv, dx, ux, dy, ua, uc, ux, ua, wc, da, dx, dv, wv, bv, dv, dx, \\
        & \qquad da, ua, dy, da, dv, bv, wv, wc, bc, uc, dx, bv, dv, ua, da, dx, dv, da, ux, uc, ua.
    \end{align*}

    \begin{figure}[ht]
      \centering
      \subfloat[Case where $y = v$.]{\includegraphics[width=0.32\textwidth]{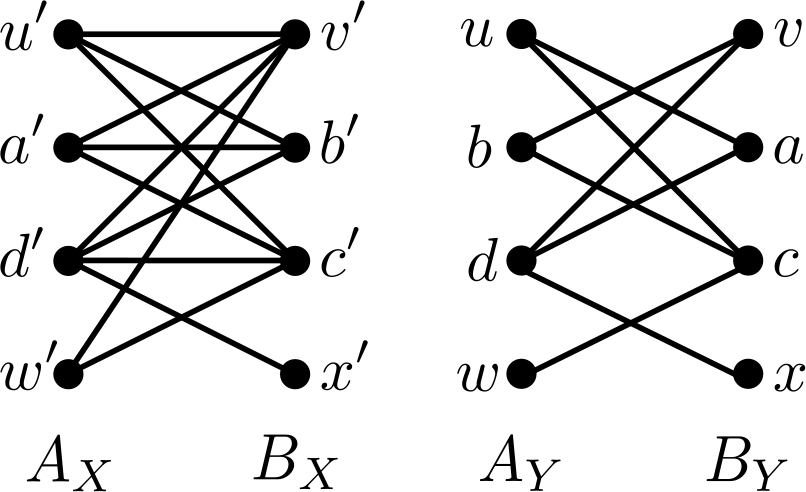}\label{fig:subcase_2.2.1_v}}
      \hfill
      \subfloat[Case where $y = x$.]{\includegraphics[width=0.32\textwidth]{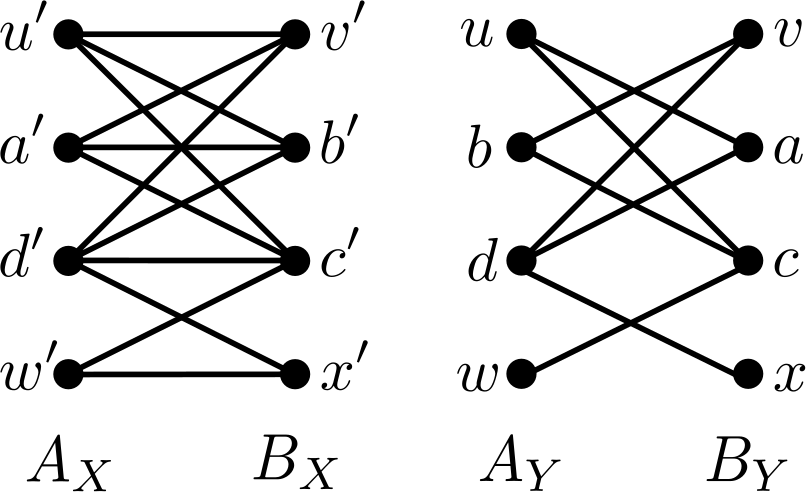}\label{fig:subcase_2.2.1_x}}
      \hfill
      \subfloat[Case where $y \notin \{v,x\}$.]{\includegraphics[width=0.32\textwidth]{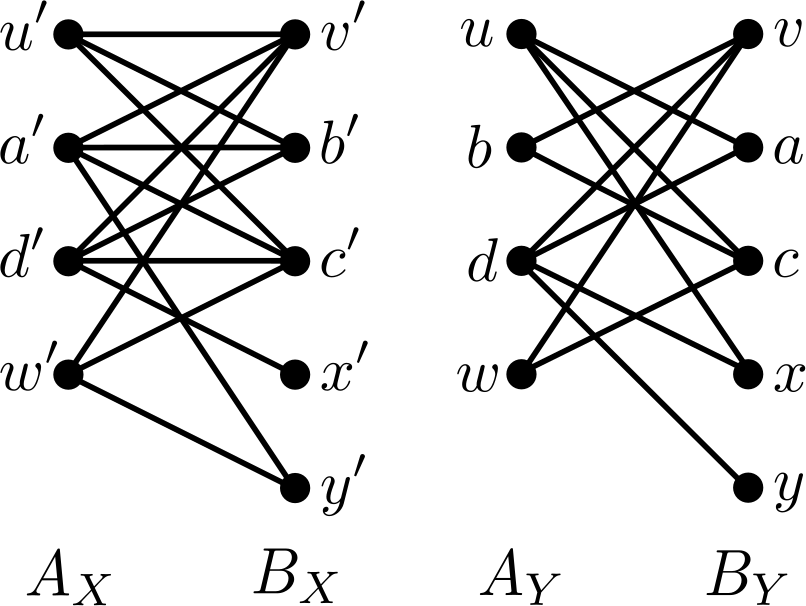}\label{fig:subcase_2.2.1_neither}}
      \caption{The vertices and edges used in Subcase 2.2.1.}
      \label{fig:subcase_2.2.1}
    \end{figure}

    \paragraph{\textbf{Subcase 2.2.2: We have $\delta(X) = r$.}} Here, $X = K_{r,r}$ and $\delta(Y) = (r+1)/2$. If there existed $w \in N(v) \cap N(a)$ such that $w' = \mu^{-1}(w) \in A_X \setminus \{u', a', d'\}$, then we may exchange $u$ and $v$ from $\mu$ by applying the sequence
    \begin{align*}
        \Tilde{\Sigma} = uc, dv, bv, ua, wa, da, ua, dv, wv, bv, dv, bc, bv.
    \end{align*}
    Figure \ref{fig:bipartite_9a} depicts these vertices and edges. We can argue the exchangeability of $u$ and $v$ from $\mu$ similarly if we replace $N(v) \cap N(a)$ in the preceding argument with one of 
    \begin{align*}
        N(v) \cap N(c), \qquad N(a) \cap N(c).
    \end{align*}
    In an analogous manner, we can argue the exchangeability of $u$ and $v$ from $\mu$ if we replace the condition $\mu^{-1}(w) \in A_X \setminus \{u', a', d'\}$ with the condition $w' = \mu^{-1}(w) \in B_X \setminus \{v', b', c'\}$ and also replace $N(v) \cap N(a)$ in the preceding argument with one of 
    \begin{align*}
        N(u) \cap N(b), \qquad N(u) \cap N(d), \qquad N(b) \cap N(d).
    \end{align*}
    Therefore, we may further assume that none of these six conditions hold. It follows from $|N(a)| \geq (r+1)/2 > 2$ that there exists $w \in N(a) \setminus \{u,d\}$. Since $b \notin N(a)$, we have that $w' = \mu^{-1}(w) \in A_X \setminus \{u',a',d'\}$. It now follows from 
    \begin{align*}
        |B_Y \setminus N(b)| \leq r-\delta(Y), \quad |B_Y \setminus N(w)| \leq r-\delta(Y), \quad (r-\delta(Y)) + (r-\delta(Y)) = r-1 < r
    \end{align*}
    that there exists $x \in N(b) \cap N(w)$. Furthermore, it follows from our most recent assumption and the fact that $a \notin N(b)$ that we have $x' = \mu^{-1}(x) \in B_X \setminus \{v',b',c'\}$. It now follows from 
    \begin{align*}
        |N(x) \setminus \{b,w\}| \geq (r+1)/2 - 2 > 0
    \end{align*}
    and our most recent assumption that there exists $y \in N(x) \setminus \{b,w\}$. Since $y \neq b$, we have that $y' = \mu^{-1}(y) \in A_X$. Similarly, there exists $z \in N(w) \setminus \{a,x\}$ such that $z' = \mu^{-1}(w) \in B_X$. Figure \ref{fig:bipartite_9b} depicts these vertices and edges. We may now exchange $u$ and $v$ from $\mu$ by applying the sequence
    \begin{align*}
        \Tilde{\Sigma} = uc, wz, ua, da, wa, wx, bx, bv, bc, dv, uc, bv, bc, bx, wx, wa, wz.
    \end{align*}
    \begin{figure}[ht]
      \centering
      \subfloat[Assuming $w$.]{\makebox[0.4\textwidth][c]{\includegraphics[width=0.14\textwidth]{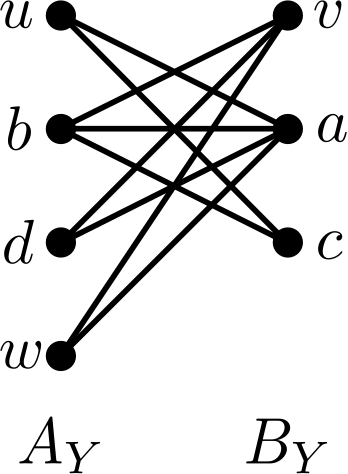}}\label{fig:bipartite_9a}}
      \hfil
      \subfloat[Assuming more.]{\makebox[0.4\textwidth][c]{\includegraphics[width=0.14\textwidth]{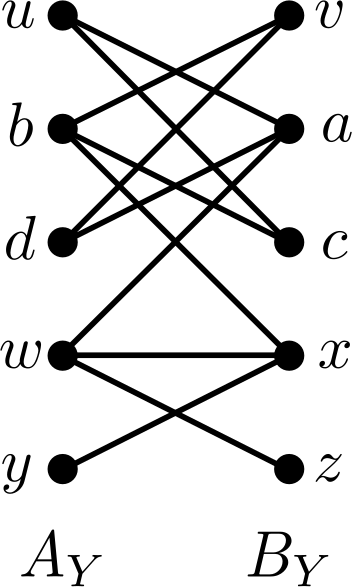}\label{fig:bipartite_9b}}}
      \caption{The vertices and edges used in Subcase 2.2.2.}
      \label{fig:subcase_2.2.2}
    \end{figure}
    
    This completes the proof of the lemma.
\end{proof}

\begin{proof}[Proof of Theorem \ref{thm:minimum_degree}]
    Suppose $X$ and $Y$ are edge-subgraphs of $K_{r,r}$ such that $\delta(X) \geq \delta(Y)$ and $\delta(X) + \delta(Y) \geq (3r+1)/2$. Lemma \ref{lem:exchangeability} implies that the hypothesis of Proposition \ref{prop:exchangeability_components} is satisfied with $\Tilde{Y} = K_{r,r}$, so it follows that $\FS(X,Y)$ and $\FS(X, K_{r,r})$ have the same number of connected components. Since $K_{r,r}$ and $X$ are both edge-subgraphs of $K_{r,r}$ with $\delta(K_{r,r}) \geq \delta(X)$ and $\delta(K_{r,r}) + \delta(X) \geq (3r+1)/2$, Lemma \ref{lem:exchangeability} implies that the hypothesis of Proposition \ref{prop:exchangeability_components} is satisfied with the pair $(K_{r,r}, X)$ playing the role of $(X,Y)$ and with $\Tilde{Y} = K_{r,r}$, so $\FS(K_{r,r}, X)$ and $\FS(K_{r,r}, K_{r,r})$ have the same number of connected components. Propositions \ref{prop:basic_properties} and \ref{prop:two_complete_bipartite} respectively imply that $\FS(X, K_{r,r}) \cong \FS(K_{r,r}, X)$ and that $\FS(K_{r,r}, K_{r,r})$ has two connected components. Altogether, it follows that $\FS(X,Y)$ also has two connected components. The theorem follows for edge-subgraphs $X$ and $Y$ of $K_{r,r}$ such that $\delta(X) < \delta(Y)$ by invoking Proposition \ref{prop:basic_properties}.
\end{proof}

Since $2\lceil (3r+1)/4 \rceil \geq \lfloor 3r/2 \rfloor + 1$, Corollary \ref{cor:symmetric_minimum_degree} follows immediately from Theorem \ref{thm:minimum_degree}. We have confirmed via a computer check that for a $2$-regular bipartite graph $Y$ whose partite sets each have three vertices, $\FS(K_{3,3}, Y)$ has exactly $12$ connected components. Corollary \ref{cor:complete_bipartite_minimum_degree} follows from this observation together with Theorem \ref{thm:minimum_degree}.

\section{Random Edge-Subgraphs} \label{sec:random_edge_subgraphs}

We will invoke the following result in the proof of Theorem \ref{thm:random_edge_subgraphs}.
\begin{proposition} \label{prop:whp_conditions}
    Let $X \in \mathcal G(K_{r,r},p)$. Then $X$ is disconnected with high probability if $p = \frac{\log r - \omega(r)}{r}$, and $X$ is connected with high probability if $p = \frac{\log r + \omega(r)}{r}$.
\end{proposition}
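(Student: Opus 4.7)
The plan is to adapt the classical proof of the Erdős--Rényi connectivity threshold to the bipartite setting. Write $A$ and $B$ for the two partite sets of $K_{r,r}$, each of size $r$. I prove the first half by a second moment argument showing that $X$ typically contains an isolated vertex (hence is disconnected for $r \geq 2$), and the second half by a first moment argument over vertex cuts showing that $X$ typically has no separating subset.

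For the first half, let $I = \sum_{v \in V(K_{r,r})} \mathbf{1}[\deg_X(v) = 0]$. Linearity gives $\E[I] = 2r(1-p)^r$, and substituting $pr = \log r - \omega(r)$ yields $\E[I] = 2\exp(\omega(r)(1 + o(1))) \to \infty$. The indicator events at vertices $u,v$ in the same partite set are independent (their potential edges to the opposite side are disjoint), and at vertices on opposite sides satisfy $\E[\mathbf{1}[\deg_X(u)=0]\mathbf{1}[\deg_X(v)=0]] = (1-p)^{2r-1}$ (they share only the potential edge $\{u,v\}$). A short computation then gives
\begin{align*}
\Var(I)/\E[I]^2 \leq \E[I]^{-1} + p/(2(1-p)) \to 0,
\end{align*}
so Chebyshev's inequality yields $\Prob(I \geq 1) \to 1$.

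For the second half, I upper bound $\Prob(X \text{ is disconnected})$ by the expected number of separating vertex subsets. If $X$ is disconnected, some proper nonempty $S \subset V(X)$ has no edges to $V(X) \setminus S$; writing $a = |S \cap A|$ and $b = |S \cap B|$, the number of $K_{r,r}$-edges crossing this cut equals $a(r-b) + (r-a)b = (a+b)r - 2ab$, so the probability that a particular such $S$ is separating in $X$ equals $(1-p)^{(a+b)r - 2ab}$. Restricting to $|S| \leq r$ by the $S \leftrightarrow V(X) \setminus S$ symmetry and applying Markov,
\begin{align*}
\Prob(X \text{ is disconnected}) \leq \sum_{k=1}^{r} \sum_{\substack{a+b = k \\ 0 \leq a, b \leq r}} \binom{r}{a}\binom{r}{b}(1-p)^{kr - 2ab}.
\end{align*}
The dominant $k = 1$ contribution is $2r(1-p)^r = 2\exp(-\omega(r)(1+o(1))) \to 0$ under $pr = \log r + \omega(r)$. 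The main obstacle is the $k \geq 2$ tail, which I handle by invoking the Vandermonde identity $\sum_{a+b=k}\binom{r}{a}\binom{r}{b} = \binom{2r}{k}$, the worst-case bound $(1-p)^{kr-2ab} \leq (1-p)^{kr-k^2/2}$ (maximizing $ab$ at the balanced split), and $\binom{2r}{k} \leq (2er/k)^k$. In the range $k = O(r/\log r)$ the factor $\exp(pk^2/2)$ is $1+o(1)$, so the per-$k$ bound simplifies to roughly $(2e/k)^k \exp(-k\omega(r))$, which is summable to $o(1)$ over $k \geq 2$. In the range $k = \Omega(r/\log r)$ the crude bounds $\binom{2r}{k} \leq 4^r$ and $(1-p)^{kr/2} \leq \exp(-\Omega(r\log r))$ yield super-exponential decay. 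Hence the full expected count tends to $0$ and Markov finishes the proof.
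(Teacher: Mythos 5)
Your first-moment/second-moment argument for the subcritical regime is essentially the paper's argument: both proofs count isolated vertices $X_1$ (your $I$), compute $\E[X_1]=2r(1-p)^r\to\infty$, and apply Chebyshev. Your explicit variance bound $\Var(I)/\E[I]^2\le \E[I]^{-1}+p/(2(1-p))$ is a clean, correct computation that exploits the bipartite independence structure, and it matches what the paper gets implicitly from $\E[X_1^2]\gtrsim \E[X_1]^2+\E[X_1]$.

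For the supercritical regime you take a genuinely different route. The paper bounds $\Pr[X\text{ disconnected}]$ by $\sum_{k\le r}\E[X_k]$, where $X_k$ counts components of size $k$; this imports Cayley's formula and buys a $k^{k-2}p^{k-1}$ factor from the spanning-tree requirement. You instead union-bound over all separating vertex subsets $S$ with $1\le |S|\le r$, which is more elementary (no tree counting) but discards the $p^{k-1}$ savings. Your exact formula $kr-2ab$ for the number of crossing $K_{r,r}$-edges is correct, and the Vandermonde collapse $\sum_{a+b=k}\binom{r}{a}\binom{r}{b}=\binom{2r}{k}$ together with the worst-case bound $(1-p)^{kr-2ab}\le(1-p)^{kr-k^2/2}$ is the right simplification. (Incidentally, your crossing-edge count is more careful than the paper's, which uses the non-bipartite exponent $k(2r-k)$.)

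The gap is in your range split. The claim that $\exp(pk^2/2)=1+o(1)$ for $k=O(r/\log r)$ is false: with $p\sim(\log r)/r$, one has $pk^2/2=\Theta(r/\log r)\to\infty$ at the top of that range, so this only holds for $k=o(\sqrt{r/\log r})$. Symmetrically, the crude bound $(1-p)^{kr/2}\le\exp(-\Omega(r\log r))$ is not valid throughout $k=\Omega(r/\log r)$: for $k=\Theta(r/\log r)$ one only gets $\exp(-\Theta(r))$, which fails to beat $\binom{2r}{k}\le 4^r$ unless the implicit constant exceeds $4\log 2$. So the middle range $\sqrt{r/\log r}\lesssim k\lesssim r/\log r$ (and part of your ``large-$k$'' range) is not actually covered by either of your stated estimates. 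The approach is salvageable: writing the per-$k$ term as $\exp\bigl[-k(\log k-1-\log 2+\omega(r)-pk/2)\bigr]$, one can split at $k=\sqrt{r}$ instead. For $k<\sqrt{r}$ the correction $pk/2=o(1)$ and the bracket is at least $\omega(r)-1-o(1)$, giving a geometric tail; for $\sqrt{r}\le k\le r$ one uses $\log k\ge(\log r)/2$ together with $pk/2\le(\log r+\omega(r))/2$ to get a bracket of at least $\omega(r)/2-1-\log 2$, and then multiplies by the at most $r$ terms. You should also, as the paper does, reduce to $\omega(r)\ll\log r$ by monotonicity so that $p\to 0$ and the $e^{-pm}$ approximations of $(1-p)^m$ are valid.
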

The latter statement of Proposition \ref{prop:whp_conditions} is \cite[Exercise 4.3.8]{frieze2016introduction}. Proposition \ref{prop:whp_conditions} follows from standard techniques in random graph theory, so we have deferred its proof to Appendix \ref{sec:exercise_solution}.

\begin{proof}[Proof of Theorem \ref{thm:random_edge_subgraphs}]
    The first part of Theorem \ref{thm:random_edge_subgraphs} follows immediately from Proposition \ref{prop:whp_conditions}, since $\FS(X,K_{r,r})$ has more than two connected components whenever $r \geq 2$ and $X$ is disconnected. We assume for the rest of the argument that $r$ is large, and that $p = \frac{\log r + \omega(r)}{r}$. All asymptotic notation will be with respect to $r$. If $p > 1/2$, it is clear that $X$ has no $r$-bridge with high probability, since $\delta(X) \geq r/2$ with high probability; this is straightforward to prove by the Chernoff bound together with a union bound over the vertices of $K_{r,r}$, for instance. Thus, we also assume that $\omega(r)$ is such that $p \leq 1/2$. We let
    \begin{itemize}
        \item $\mathcal C_1(r)$ denote the collection of all edge-subgraphs of $K_{r,r}$ with a path $v_1, \dots, v_r$ such that $v_2, \dots, v_{r-1}$ have degree $2$ in $X$;
        \item $\mathcal C_2(r)$ denote the collection of all edge-subgraphs of $K_{r,r}$ which have a connected component that is a path on a number of vertices in $\{r-2, r-1, \dots, 2r-2\}$;
        \item $\mathcal C_2(r,k)$ denote the subcollection of $\mathcal C_2(r)$ consisting of such edge-subgraphs whose largest path component is on $k$ vertices, so that $\mathcal C_2(r) = \bigsqcup_{k=r-2}^{2r-2} \mathcal C_2(r,k)$;
        \item $\mathcal C_3(r)$ denote the collection of all edge-subgraphs of $K_{r,r}$ which have a connected component that is a tree on a number of vertices in $\{r-2, r-1, \dots, 2r-2\}$;
        \item $\mathcal C_3(r,k)$ denote the subcollection of $\mathcal C_3(r)$ consisting of such edge-subgraphs whose largest tree component is on $k$ vertices, so that $\mathcal C_3(r) = \bigsqcup_{k=r-2}^{2r-2} \mathcal C_3(r,k)$.
    \end{itemize}
    The event $X \in \mathcal C_1(r)$ contains the event that $X$ is a cycle and the event that $X$ contains an $r$-bridge. Take $X \in \mathcal C_1(r)$, with $k$-bridge $v_1, \dots, v_k$ such that $k$ is maximal; in particular, $r \leq k \leq 2r$. For such a graph $X$, we may remove the edges $\{v_1, v_2\}$ and $\{v_{k-1}, v_k\}$ from $X$ to form an edge-subgraph with a component that is a path on $k-2$ vertices, and thus lies in $\mathcal C_2(r)$. This operation gives rise to a map $\varphi: \mathcal C_1(r) \to \mathcal C_2(r)$ such that any edge-subgraph $Y \in \mathcal C_2(r)$ satisfies
    \begin{align*}
        |\varphi^{-1}(Y)| \leq 2(r+2)^2.
    \end{align*}
    Indeed, $Y$ has at most two path components on at least $r-2$ vertices. Furthermore, since $p \leq 1/2$, any edge-subgraph $X \in \mathcal C_1(r)$ is no more likely in $\mathcal G(K_{r,r},p)$ than its corresponding edge-subgraph $\varphi(X) \in \mathcal C_2(r)$, which was reached by deleting edges from $X$. We now fix some $k \in \{r-2, \dots, 2r-2\}$. The number of paths on $k$ vertices is $k!/2$, and by Cayley's formula, the number of trees on $k$ vertices is $k^{k-2}$. We break into cases based on the value of $k$.

    \smallskip
    
    \paragraph{\textbf{Case 1: We have $k > r$.}} Since we are concerned with edge-subgraphs of $K_{r,r}$, which has exactly $2r$ vertices, all graphs in $\mathcal C_2(r,k)$ and $\mathcal C_3(r,k)$ necessarily have exactly one tree component on $k$ vertices. Thus, we may partition both of the collections $\mathcal C_2(r,k)$ and $\mathcal C_3(r,k)$ based on the vertex set for the unique tree component on $k$ vertices and the adjacencies amongst the remaining $2r-k$ vertices of $K_{r,r}$. By fixing some block of this partition, comparing the sizes of the corresponding subsets of $\mathcal C_2(r,k)$ and $\mathcal C_3(r,k)$ that are in this block, and summing over all blocks in this partition, it follows that uniformly over such values of $k$,\footnote{We say that $A \ll B$ if $A/B \to 0$ as $r \to \infty$. We say that $A \lesssim B$ if there exists a constant $C > 0$ such that $A \leq CB$ for all large $r$.}
    \begin{align*}
        \frac{|\mathcal C_2(r,k)|}{|\mathcal C_3(r,k)|} = \frac{k!/2}{k^{k-2}} \ll 1/k^2 \lesssim 1/r^2,
    \end{align*}
    where the $\ll$ is from (for example) Stirling's approximation. Since all graphs in $\mathcal C_2(r,k)$ and $\mathcal C_3(r,k)$ in a given block of this partition have the same number of edges, they all have the same probability of being realized under $\mathcal G(K_{r,r},p)$. By summing over all blocks of the partition, we conclude that
    \begin{align*}
        \Pr[X \in \mathcal C_2(r,k)] \ll (1/r^2) \Pr[X \in \mathcal C_3(r,k)].
    \end{align*}
    
    \paragraph{\textbf{Case 2: We have $k \leq r$.}} 
    
    Here, $X \in \mathcal C_2(r,k)$ may have up to two tree components of size at least $k$. We thus partition $\mathcal C_2(r,k)$ based on the vertex sets for the tree components on at least $k$ vertices, at least one of which is a path, and the adjacencies amongst the remaining vertices of $K_{r,r}$. By proceeding similarly as in Case 1, we may deduce that
    \begin{align*}
        \Pr[X \in \mathcal C_2(r,k)] \ll (1/r^2) \sum_{i=r-2}^{r+2} \Pr[X \in \mathcal C_3(r,i)].
    \end{align*}
    
    Altogether, letting $X \in \mathcal G(K_{r,r},p)$, it now follows that
    \begin{align*}
        \Pr[X \in \mathcal C_1(r)] & \lesssim r^2 \Pr[X \in \mathcal C_2(r)] = r^2 \sum_{k=r-2}^{2r-2} \Pr[X \in \mathcal C_2(r,k)] \ll r^2 \cdot (1/r^2) \sum_{k=r-2}^{2r-2} \Pr[X \in \mathcal C_3(r,k)] \\
        & = \Pr[X \in \mathcal C_3(r)] \leq \Pr[ X \text{ is disconnected} ].
    \end{align*}
    The desired result now follows easily from Theorem \ref{thm:bipartite_fs_connectivity} and Proposition \ref{prop:whp_conditions}.
\end{proof}

\section{Future Directions} \label{sec:future_directions}

The main results of this article provide satisfying conclusions to a number of problems with formerly partial solutions. We now conclude by opening some other doors, proposing several future new (to the best of our knowledge) problems related to this content of this article.

\subsection{Cutoff Phenomena}

Theorem \ref{thm:minimum_degree} confirms that in the bipartite setting, the cutoff for avoiding isolated vertices is the same as the cutoff for $\FS(X,Y)$ to have the smallest possible number of connected components. This is in stark contrast to what happens in the non-bipartite variant of the problem: as noted in the discussion around \cite[Theorem 1.3]{alon2022typical}, if $\delta(X) \geq n/2$ and $\delta(Y) \geq n/2$, then $\FS(X,Y)$ cannot have an isolated vertex. An interesting inquiry results from asking under what regimes on $X$ and $Y$ the two cutoffs are equal, and whether the result for edge-subgraphs of $K_{r,r}$ is really a special case of a more general phenomenon. In particular, we notice that $K_{r,r}$ is an $r$-regular graph on $2r$ vertices, so that if there were a discrepancy between the two cutoffs, then it could not have been due to the obstruction in the non-bipartite case that we mentioned above. We thus wonder if this obstruction is actually the central part of the story. As a first step towards a deeper investigation, we pose the following Question \ref{ques:edge_subgraphs}.

\begin{question} \label{ques:edge_subgraphs}
    Let $X$ and $Y$ be edge-subgraphs of an $n$-vertex graph $G$ such that $\Delta(G) \leq n/2$ and $\FS(G, G)$ has no isolated vertex. Is it always the case that the cutoff for $\FS(X,Y)$ to avoid isolated vertices is the same as the cutoff for $\FS(X,Y)$ to have the same number of connected components as $\FS(G, G)$?
\end{question}

On a related note, we remind the reader of \cite[Question 7.5]{alon2022typical}, which asks whether or not there is a stopping time result for friends-and-strangers graphs that is analogous to the corresponding classical phenomenon for the binomial random graph.

\subsection{Asymmetric Generalizations}

We may study generalizations of the setups under which the main results of this article, and those of \cite{alon2022typical, bangachev2022asymmetric, zhu2023evacuating}, were obtained. In one direction, we note that Theorem \ref{thm:bipartite_fs_connectivity} is really a special case of \cite[Theorem 1.7]{zhu2023evacuating}, which applies for imbalanced bipartite graphs. In this spirit, one natural setup results from investigating the connectivity of $\FS(X,Y)$, where we take (for values $a,b < n$) $X$ to be an edge-subgraph of $K_{a,n-a}$ and $Y$ to be an edge-subgraph of $K_{b,n-b}$. Here, we pose the following Problem \ref{prob:asymmetric_generalization_extremal}.
\begin{problem} \label{prob:asymmetric_generalization_extremal}
    Fix values $a,b < n$. Let $X$ be an edge-subgraph of $K_{a,n-a}$ and $Y$ be an edge-subgraph of $K_{b,n-b}$. Find conditions on $\delta(X)$ and $\delta(Y)$ which guarantee that $\FS(X,Y)$ has exactly two connected components.
\end{problem}
In another direction, one way to interpret the setup under which Theorem \ref{thm:random_edge_subgraphs} holds is by assuming that we draw $X \sim \mathcal G(K_{r,r}, 1)$ and $Y \sim \mathcal G(K_{r,r}, p)$. We may asymmetrize the problem by letting $X \sim \mathcal G(K_{r,r}, p_1)$ and $Y \sim \mathcal G(K_{r,r}, p_2)$ for two values $p_1, p_2 \in [0,1]$. Notably, this model also encapsulates the results of \cite{alon2022typical, milojevic2023connectivity}. We may then consider the following generalization of \cite[Question 7.6]{alon2022typical}.
\begin{question} \label{ques:asymmetric_generalization_prob}
    Under what conditions on $p_1$ and $p_2$ will $\FS(X,Y)$ have two connected components with high probability? Under what conditions on $p_1$ and $p_2$ will $\FS(X,Y)$ have more than two connected components with high probability?
\end{question}
Since the structural results of \cite{zhu2023evacuating} are concerned strictly with the setting in which $Y$ is fixed to be a complete multipartite graph, progress on Problem \ref{prob:asymmetric_generalization_extremal} and Question \ref{ques:asymmetric_generalization_prob} will likely require many novel ideas. 

\subsection{Number of Connected Components}

In this paper, we derived conditions which concerned whether or not $\FS(X,Y)$ achieved the minimum possible number of connected components. We may instead ask for a more granular understanding of the number of connected components of $\FS(X,Y)$. In this direction, we pose the following probabilistic problem, which gives a bipartite analogue of \cite[Problem 7.9]{alon2022typical}.
\begin{problem}
    Let $X$ and $Y$ be independent random graphs drawn from $G(K_{r,r},p)$. Obtain estimates for the expected number of connected components of $\FS(X,K_{r,r})$, and for the expected number of connected components of $\FS(X,Y)$.
\end{problem}

\section*{Acknowledgments}

I would like to sincerely thank two anonymous referees for thoroughly reading over a prior draft of this article, catching a number of mistakes, for providing constructive feedback, and for suggesting many intriguing directions for future research (which have been implemented in Section \ref{sec:future_directions}).

\begin{appendices}

\section{Proof of Proposition \ref{prop:whp_conditions}} \label{sec:exercise_solution}

\begin{proof}
    For $k \in [2r]$, let $X_k$ denote the number of connected components of $X$ on $k$ vertices. For each $v \in V(X)$, let $I_v$ be the indicator random variable corresponding to the event that $v$ is an isolated vertex in $X$, so that $X_1 = \sum_{v \in V(X)} I_v$. We have
    \begin{align}
        \E[X_1] & = 2r(1-p)^r = 2e^{\log r + r\log(1-p)} = 2e^{\log r - r(p+O(p^2))}, \label{eq:isolated_vx_expectation} \\
        \E[X_1^2] & = \mathop{\sum_{u, v \in V(X),}}_{u \neq v} \Pr[I_uI_v = 1] + \sum_{v \in V(X)} \Pr[I_v = 1] = \mathop{\sum_{u, v \in V(X),}}_{u \neq v} \Pr[I_uI_v = 1] + \E[X_1] \nonumber \\
        & \geq 2r(2r-1)(1-p)^{2r} + \E[X_1] \gtrsim (2r)^2(1-p)^{2r} + \E[X_1] = \E[X_1]^2 + \E[X_1]. \nonumber
    \end{align}
    Towards proving the first statement, we begin by assuming that $p = \frac{\log r - \omega(r)}{r}$. We additionally assume that $\omega(r) \leq \log r$, so that $p \geq 0$. Here, we may further \eqref{eq:isolated_vx_expectation} to get
    \begin{align*}
        \E[X_1] & = e^{\omega(r) + O((\log r)^2/r)} \gg 1.
    \end{align*}
    Since we have that
    \begin{align*}
        \Pr[X_1 > 0] \geq \Pr\left[|X_1 - \E[X_1]| \geq \E[X_1]\right] \geq 1 - \frac{\Var[X_1]}{\E[X_1]^2} \geq 1 - \frac{1}{\E[X_1]} = 1-o(1),
    \end{align*}
    it holds with high probability that $X$ contains an isolated vertex, which implies the first statement. Now assume that $p = \frac{\log r + \omega(r)}{r}$ and that $\omega(r)$ is such that $p \leq 1$. It is clear that the probability that $X$ is connected increases in $p$, so it suffices to prove the desired result when $\omega(r) \ll \log r$: henceforth, we assume this. Here, we may further \eqref{eq:isolated_vx_expectation} to get
    \begin{align} \label{eq:isolated_vx_expectation_supercritical}
        \E[X_1] & = e^{-\omega(r) + O((\log r + \omega(r))^2/r)} \ll 1.
    \end{align} 
    If $X$ is disconnected, it must have a component with at most $r$ vertices. We now closely follow the proof of \cite[Theorem 4.1]{frieze2016introduction}. We have that
    \begin{align*}
        \Pr\left[ X_1 > 0 \right] & \leq \Pr\left[X \text{ is disconnected}\right] \leq \Pr\left[ X_1 > 0 \right] + \sum_{k=2}^r \Pr\left[ X_k > 0 \right].
    \end{align*}
    Now, where the latter inequality in the first line bounds $\E[X_k]$ by the number of spanning trees for components on $k$ vertices and invokes Cayley's theorem,
    \begin{align*}
        \sum_{k=2}^r \Pr\left[ X_k > 0 \right] & \leq \sum_{k=2}^r \E\left[ X_k \right] \leq \sum_{k=2}^r \binom{2r}{k} k^{k-2}p^{k-1}(1-p)^{k(2r-k)} \\
        & \leq \sum_{k=2}^r \left( \frac{2re}{k} \right)^k k^{k-2}\left(\frac{\log r + \omega(r)}{r} \right)^{k-1}e^{-k(2r-k)\frac{\log r + \omega(r)}{r}} \\
        & \lesssim \sum_{k=2}^r ( 2re )^k \left(\frac{\log r}{r} \right)^{k-1}e^{-k(2r-k)\frac{\log r + \omega(r)}{r}} \\
        & \lesssim \sum_{k=2}^{10} (re)^k \left(\frac{\log r}{r} \right)^{k-1}e^{k(5-r)\frac{\log r + \omega(r)}{r}} + \sum_{k=11}^r ( 2re )^k\left(\frac{\log r}{r} \right)^{k-1}e^{-k(\log r + \omega(r))/2} \\
        & \lesssim \sum_{k=2}^{10} e^{-k\omega(r)}\left(\frac{\log r}{r} \right)^{k-1} + \sum_{k=11}^r \frac{r}{4}\left(\frac{e^{1-\omega(r)/2}\log r}{(r/4)^{1/2}} \right)^k = o(1) + \sum_{k=11}^r (r/4)^{1+o(1)-k/2} \ll 1.
    \end{align*}
    It thus follows that
    \begin{align*}
        \Pr\left[X \text{ is connected}\right] = \Pr\left[ X_1 = 0 \right] + o(1) = 1-o(1),
    \end{align*}
    since it follows from Markov's inequality and \eqref{eq:isolated_vx_expectation_supercritical} that $\Pr[X_1 \geq 1] \ll 1$.
\end{proof}

\end{appendices}

\section*{References}

\printbibliography[heading=none]

\end{document}